\newtheorem{thm}{Theorem}[section]
\newtheorem{introthm}{Theorem}
\newtheorem{lemma}[thm]{Lemma}
\newtheorem{prop}[thm]{Proposition}
\newtheorem{cor}[thm]{Corollary}
\theoremstyle{definition}
\newtheorem{ex}[thm]{Example}
\newtheorem{defi}[thm]{Definition}
\title[General matrix differential equations]{Simplifying matrix differential equations with general coefficients}
\author{Man Cheung Tsui}
\curraddr{Dept. of Mathematics\\
Florida State University\\ 
Tallahassee, FL 32304\\
USA}
\email{mktsui@fsu.edu}
\subjclass[2010]{12H05, 34M50}
\keywords{Picard-Vessiot theory, differential algebra, essential dimension, generic extensions}
\DeclareMathOperator{\ed}{ed}
\DeclareMathOperator{\res}{res}
\newcommand{\ded}{\ed^{\delta}}
\DeclareMathOperator{\Frac}{Frac}
\DeclareMathOperator{\SO}{SO}
\DeclareMathOperator{\OO}{O}
\DeclareMathOperator{\PGL}{PGL}
\DeclareMathOperator{\SL}{SL}
\DeclareMathOperator{\Gal}{Gal}
\newcommand{\Dgal}{\Gal^{\delta}}
\DeclareMathOperator{\trdeg}{trdeg}
\DeclareMathOperator{\GL}{GL}
\newcommand{\dtrdeg}{\trdeg^{\delta}}
\DeclareMathOperator{\diag}{diag}
\newcommand{\anglebrac}[1]{\langle{#1}\rangle}
\begin{document}
\maketitle

\begin{abstract}
We show that the $n\times n$ matrix differential equation $\delta(Y)=AY$ with $n^2$ general coefficients cannot be simplified to an equation in less than $n$ parameters by using gauge transformations whose coefficients are rational functions in the matrix entries of $A$ and their derivatives. Our proof uses differential Galois theory and a differential analogue of essential dimension. We also bound the minimum number of parameters needed to describe some generic Picard-Vessiot extensions.
\end{abstract}

\section{Introduction}
\label{section:introduction}
A classical problem in mathematics is simplifying polynomials. Let $F$ be a field of characteristic zero. The general quadratic polynomial $x^2+ax+b$ in two algebraically independent coefficients $a$ and $b$ over $F$ simplifies to a polynomial $y^2+c$ in one coefficient $c$ by taking $y = x+a/2$ and $c = b - a^2/4$. Similarly the general cubic $x^3+ax^2+bx+c$ simplifies to a polynomial of the form $y^3 + dy+d$ by using translations and dilations. More generally, one can simplify the general polynomial of degree $n$ by means of a larger class of transformations called Tschirnhaus transformations. Let $\tau(n)$ denote the number of algebraically independent coefficients needed to write down such a polynomial after simplifying by Tschirnhaus transformations. We just saw $\tau(2) = \tau(3) = 1$. The works of classical mathematicians, J. Buhler and Z. Reichstein in \cite{buhler1997essential}, and subsequent authors give bounds on $\tau(n)$. However, the exact values of $\tau(n)$ remain unknown for $n\ge 8$.

This paper studies the differential equations analogue of the above problem. Namely, we quantify precisely how much the general matrix differential equation and the general homogeneous linear differential equation simplify. 

From now on, $(F,\delta)$ is a \emph{differential field}, i.e., $F$ is a field equipped with a derivation map $\delta:F\rightarrow F$. For example, the complex function field $\mathbb C(x)$ with the derivation $d/dx$ is a differential field. We will always assume that $F$ has characteristic zero and that all differential fields discussed in this paper contain $F$.

Consider the \emph{general $n\times n$ matrix differential equation} $\delta(Y)=AY$ where the $n^2$ matrix entries $a_{ij}$ of $A$ are \emph{differentially algebraically independent} over $F$, i.e., $a_{ij}$ and all their formal derivatives $a_{ij}^{(k)}$ are algebraically independent over $F$.

We wish to simplify this equation by means of gauge transformations. Given two $n\times n$ matrix differential equations $\delta(U)=CU$ and $\delta(V)=DV$ over a differential field $L$, we say that $\delta(V)=DV$ is \emph{gauge equivalent} to $\delta(U)=CU$ if there exists a matrix $P\in \GL_{n}(L)$ such that $V=PU$, in which case we have
$$
D = PCP^{-1}+\delta(P)P^{-1}.
$$
The transformation from $\delta(U) = CU$ to $\delta (V) = DV$ is called a \emph{gauge transformation}. By the cyclic vector theorem \cite[Proposition 2.9]{van2012galois}, the general matrix differential equation $\delta(Y)=AY$ is gauge equivalent to an equation of the form 
\begin{equation}\label{eq:gauge}
    \delta(Z)=BZ,\quad B = \begin{pmatrix}
0 & 1 & 0 & \cdots & 0\\
0 & 0 & 1 & \cdots & 0\\
\vdots &\vdots & \vdots & \ddots & \vdots\\
0 & 0 & 0 & \cdots & 1\\
-b_0 & -b_1 & -b_2 & \cdots & -b_{n-1}\\
\end{pmatrix},
\end{equation}
over the differential field 
$$
K=F\left(a_{ij}^{(k)}\;\middle|\; i,j\in\{1,2,...,n\}, k\ge 0\right).
$$
\sloppy Thus $\delta(Y)=AY$ simplifies to a differential equation in the $n$ ``parameters'' $b_{0},...,b_{n-1}\in K$.

In general, let $\gamma(n)$ be the minimum number of parameters we may simplify $\delta(Y)=AY$ to by using gauge transformations over $K$. More precisely, we define $\gamma(n)$ to be the minimum over the number of differentially algebraically independent entries of $B$, ranging over all equations $\delta(Z)=BZ$ gauge equivalent to $\delta(Y)=AY$ over $K$. We just saw that $\gamma(n)\le n$. The main result of this paper is the following.

\begin{introthm}[Theorem \ref{thm:gn=n}]\label{introthm:gn=n}
We have $\gamma(n) = n$ for all $n\ge 1$.
\end{introthm}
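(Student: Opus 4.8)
The plan is to prove $\gamma(n) = n$ by establishing the lower bound $\gamma(n) \ge n$, since the upper bound $\gamma(n) \le n$ is already furnished by the cyclic vector reduction. The strategy I would pursue is to introduce a differential analogue of essential dimension, call it $\ded$, which measures the minimal number of differentially algebraically independent parameters needed to ``define'' a differential object over $K$, and to show two inequalities: first, that $\gamma(n)$ is bounded below by the essential dimension of the differential Galois group attached to the general equation, and second, that this essential dimension equals $n$. The key conceptual point is that any gauge transformation over $K$ preserves the Picard--Vessiot extension and hence its differential Galois group; so if we could simplify $\delta(Y)=AY$ to an equation in fewer than $n$ differentially algebraically independent parameters, we would be descending the associated Picard--Vessiot extension to a differential subfield of $K$ generated by fewer parameters, contradicting a lower bound on $\ded$.

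The first concrete step is to identify the differential Galois group of the general equation $\delta(Y)=AY$ over $K$. Because the entries $a_{ij}$ are differentially algebraically independent, the equation is as ``generic'' as possible, and I expect the differential Galois group to be the full $\GL_n$ (over the field of constants). I would verify this by exhibiting a Picard--Vessiot extension $K \subset E$ whose differential transcendence degree, or the dimension of the group, forces $\GL_n$; genericity of the coefficients should prevent the Galois group from landing in any proper algebraic subgroup, since any such containment would impose a differential-algebraic relation among the $a_{ij}$.

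The second step is to set up the inequality $\gamma(n) \ge \ded_F(\GL_n)$ in the appropriate differential sense, where I would define $\ded$ so that a gauge reduction to $m$ differentially independent parameters yields a field of definition with differential transcendence degree $m$ over $F$ inside $K$. The point is that a simplified equation $\delta(Z)=BZ$ with $B$ having $m$ differentially algebraically independent entries generates a differential subfield $F\langle b_0,\dots\rangle \subseteq K$ of differential transcendence degree $m$ over which the Picard--Vessiot extension (equivalently the $\GL_n$-torsor) is already defined; thus $\ded^{\delta}_F(\GL_n) \le m$, giving $\gamma(n) \ge \ded^{\delta}_F(\GL_n)$. I would then compute $\ded^{\delta}_F(\GL_n) = n$, presumably by adapting the torsor/cohomological lower bounds from the algebraic theory of essential dimension (as in Buhler--Reichstein) to the differential setting, using that the generic $\GL_n$-torsor cannot be defined over a field of differential transcendence degree less than $n$.

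The main obstacle I anticipate is the lower bound $\ded^{\delta}_F(\GL_n) \ge n$: the upper bound and the reduction $\gamma(n) \ge \ded^{\delta}_F(\GL_n)$ are largely formal, but proving that one genuinely cannot descend below $n$ differential parameters requires a robust differential essential-dimension machinery, including a differential analogue of the ``incompressibility'' of generic torsors. I expect the crux to be establishing that the differential transcendence degree of the relevant generic Picard--Vessiot extension over any field of definition is at least $n$, which likely rests on a careful bookkeeping of differential transcendence degrees together with the structure of $\GL_n$ as a differential algebraic group. If a clean differential version of the ``generic torsor has essential dimension $\ge$ rank'' statement is unavailable, I would fall back on directly tracking $\dtrdeg$ through the gauge equivalence to derive the contradiction by hand.
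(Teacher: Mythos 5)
Your overall architecture coincides with the paper's: the upper bound $\gamma(n)\le n$ from the cyclic vector theorem, the identification of the differential Galois group of the general equation as $\GL_n$ (Example \ref{ex:general-de}), the definition of a differential essential dimension via differential fields of definition (Definition \ref{def:ded}), and the formal inequality $\gamma(n)\ge\ded_F(L/K)$ (Proposition \ref{prop:gamma=GL}) are exactly the paper's first steps. The genuine gap is at the point you yourself flag as the crux, the lower bound $\ded_F(L/K)\ge n$, and your primary plan for it --- adapting the ``incompressibility of the generic torsor'' lower bounds of Buhler--Reichstein --- does not transfer. Classically $\GL_n$ is a special group: every $\GL_n$-torsor over a field is trivial and $\ed(\GL_n)=0$, so there is no cohomological incompressibility statement for generic $\GL_n$-torsors to adapt. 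The differential lower bound has to come from the differential field structure of the Picard--Vessiot extension itself, not from torsor cohomology.

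Concretely, the paper supplies three ingredients that are absent from your outline. First, descent of $L/K$ to $L_0/K_0$ induces descent of the subextension $L/L^{H}$ to $L_0/L_0^{H}$ for any closed subgroup $H\le\GL_n$ (Corollary \ref{cor:descent-subPV}), which reduces the problem to bounding $\ded_F(L/L^{H})$ below for the diagonal subgroup $H=(\mathbb{Z}/k\mathbb{Z})^n$. Second, for $G^n$-extensions with $G=\mathbb{Z}/k\mathbb{Z}$, $\mathbb{G}_m$, or $\mathbb{G}_a$, the bound $\ded=n$ is proved by induction on $n$ whose engine is a rigidity statement (Kummer theory, respectively the Kolchin--Ostrowski theorem): two generators of the same $G$-subextension satisfy an explicit monomial or linear relation over the base, and this forces a nontrivial differential-algebraic relation that collapses the differential transcendence degree if descent to fewer than $n$ parameters were possible (Proposition \ref{prop:ded-gm-ga}, Corollary \ref{cor:ded-gm-ga}). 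Third, since $L^{H}$ is much larger than the subfield on which the $(\mathbb{Z}/k\mathbb{Z})^n$-action is visible, one needs a differential analogue of Roquette's specialization lemma (Propositions \ref{prop:fg-embed} and \ref{prop:ded-rational-ext}) to discard the extra, purely differentially transcendental generators, at the cost of first enlarging $F$ by uncountably many differential indeterminates. Your fallback of ``directly tracking $\dtrdeg$ through the gauge equivalence'' does not substitute for these: nothing formal prevents a gauge transformation over $K$ from having coefficients in a small differential subfield, and ruling that out is precisely what the rigidity input and the specialization lemma accomplish.
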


The matrix differential equation \eqref{eq:gauge} is associated to the homogeneous linear differential equation
\begin{equation}\label{eq:homog-lin}
y^{(n)}+b_{n-1}y^{(n-1)}+\cdots+b_{1}y^{(1)}+b_0y = 0
\end{equation}
(see \cite[page 8]{van2012galois}). Theorem \ref{introthm:gn=n} tells us the coefficients $b_{0},...,b_{n-1}$ are differentially algebraically independent over $F$. In particular, we can view \eqref{eq:homog-lin} as the \emph{general homogeneous linear differential equation}. Moreover, Theorem \ref{introthm:gn=n} says that we cannot simplify \eqref{eq:homog-lin} further using gauge transformations over $K$. In this sense, homogeneous linear differential equations are the most economical way to write matrix differential equations up to gauge transformations.
This is in direct contrast to the case of polynomials --- the transformation $y = x+a_{n-1}/n$ reduces the polynomial
$$
x^{n}+a_{n-1}x^{n-1}+\cdots+a_1x+a_0
$$
to a polynomial with no $y^{n-1}$ term. 

However, we can simplify \eqref{eq:homog-lin} with an additional operation. Let $c$ be an element of a differential field containing $K$ that satisfies $\delta(c) = \frac{1}{n}b_{n-1}c$, or informally $c = e^{\frac{1}{n}\int b_{n-1}}$. The \emph{exponential product} transformation $z = cy$ simplifies \eqref{eq:homog-lin} to an equation of the form
$$
z^{(n)}+0\cdot z^{(n-1)}+c_{n-2}z^{(n-2)}+\cdots+c_{1}z^{(1)}+c_0z = 0
$$
in the $n-1$ parameters $c_0, ...,c_{n-2}$. Thus the next natural problem (outside the scope of this paper) is to determine how much \eqref{eq:homog-lin} simplifies by using gauge transformations and exponential products. Since the transformation $z=cy$ corresponds to a gauge transformation of \eqref{eq:gauge} over the differential field $K(c)$, the problem can be phrased more precisely in the following way. Let $L/K$ be an \emph{extension by exponentials}, i.e., let there be containments of differential fields $K = K_0 \subseteq K_1 \subseteq \cdots \subseteq K_m = L$ that satisfy $K_{i+1} = K_{i}(e^{\int a_{i}})$ for some $a_{i}\in K_{i}$. The rephrased problem is to determine how much \eqref{eq:gauge} simplifies by gauge transformations over such extensions $L$.\bigskip

% Relation to essential dimension literature
The proof of Theorem \ref{introthm:gn=n} relies on adapting the notion of essential dimension to differential algebra. The theory of essential dimension, introduced by J. Buhler and Z. Reichstein in \cite{buhler1997essential}, bounds the number $\tau(n)$. This theory has since been greatly generalized and found many applications. For example, it is used to bound the minimum number of parameters of generic Galois extensions (see \cite[Section 8.2]{jensen2002generic}). The introductory article \cite{reichstein2012whatis} and the survey papers \cite{merkurjev2017essential} and \cite{reichstein2010essential} explain the theory of essential dimension and its applications.

% Relations to generic Picard-Vessiot extensions literature
In differential algebra, there is an analogous Galois theory for matrix differential equations. The resulting differential Galois extensions are called \emph{Picard-Vessiot extensions}, and many researchers 
have constructed \emph{generic Picard-Vessiot extensions} --- extensions which parametrize all other Picard-Vessiot extensions with the same differential Galois group. See for instance the papers \cite{goldman1957specialization}, \cite{juan2016generic},  \cite{juan2007generic}, and \cite{seiss2020general} for more on generic Picard-Vessiot extensions and related concepts. Oftentimes the constructed generic Picard-Vessiot extension requires many parameters, and it is not clear if there exists a generic Picard-Vessiot extension with the same differential Galois group that has fewer parameters. Corollary \ref{cor:generic-PV} gives lower bounds on the number of parameters required by generic Picard-Vessiot extensions with differential Galois groups $\GL_n$,  $\mathbb{G}^n_m$, $\mathbb{G}^n_a$, $(\mathbb{Z}/k\mathbb{Z})^n$, $\OO_n$, $\SL_n$, $\mathbb{U}_n$, and $\SO_n$.

\textbf{Acknowledgements:} This work contains results from the author's thesis. The author thanks Anand Pillay and R\'{e}mi Jaoui for spotting a problem in an early version of Proposition \ref{prop:ded-gm-ga}, David Harbater for his generous input and helpful comments especially towards Proposition \ref{prop:ded-gm-ga}, the Kolchin Seminar in Differential Algebra for the opportunity to present an early version of this work, Michael Wibmer for helpful conversations, Mark van Hoeij and the referee for helpful comments, and Julia Hartmann for her patient advising and ample support.

\section{Background}\label{section:background}

We first review some notions in differential algebra. For details, see \cite[Chapter 1]{van2012galois} and \cite[Chapters 1 -- 2]{kolchin1973differential}.

Let $\mathbb N = \{0,1,2,...\}$. For us, a \emph{ring} is unital and commutative. 

Let $R$ be a \emph{differential ring}, i.e., $R$ is a ring equipped with a derivation $\delta:R\rightarrow R$. We often write $r^{(n)}\coloneqq(\delta\circ\cdots\circ \delta)(r)$ for the $n$-fold application of the derivation $\delta$ on $r$. We will not use the notation $r'$ to mean $\delta(r)$. For a subset $\pmb{r}$ of $R$, we write $\pmb{r}^{(n)}$ for the set $\left\lbrace r^{(n)}\;\middle|\; r\in\pmb{r}\right\rbrace$. The \emph{ring of constants} of $R$ is the subring $C_{R} \coloneqq \left\lbrace r\in R\;\middle|\;\delta(r) = 0\right\rbrace$
of $R$. If $C_{R}$ is a field, which occurs when $R$ is a differential field, we say $C_R$ is the  \emph{field of constants} of $R$. A \emph{differential homomorphism} is a ring homomorphism of differential rings that commutes with the derivations. Given an algebra $C'$ over the field of constants $C$ of $F$, the ring $F\otimes_C C'$ becomes a differential ring with derivation defined by $\delta(r\otimes c)\coloneqq\delta(r)\otimes c$.

\emph{Differential algebras} and \emph{differential field extensions} are similarly defined as algebras and field extensions with compatible derivations and whose structure ring homomorphisms are differential homomorphisms.

\subsection{Differential polynomials and differential transcendence degree}
Let $I$ be a set and let $x_{i}^{(j)}$ be algebraic indeterminates over $F$ for $i\in I$ and $j\in \mathbb N$. A \emph{differential polynomial ring} over $F$ in the indeterminates $x_{i}$ for $i\in I$ is the differential $F$-algebra 
$$F\left\lbrace x_i\;\middle| \; i\in I\right\rbrace\coloneqq F\left[x_{i}^{(j)}\;\middle|\; i\in I, j\in \mathbb{N}\right]$$
whose derivation $\delta$ is determined by $\delta x_{i}^{(j)}=x_{i}^{(j+1)}$ for all $i\in I$ and $j\in \mathbb N$. Such $x_i$ are called \emph{differential indeterminates} over $F$.
Given a differential $F$-algebra $R$ and a subset $\{r_{i}\mid i\in I\}$ of $R$, the image of the differential homomorphism 
$$\varphi:F\left\lbrace x_i\;\middle| \; i\in I\right\rbrace\rightarrow R:\quad x_{i}\mapsto r_{i}$$
is denoted by $F\left\lbrace r_i\;\middle| \; i\in I\right\rbrace$. If $\varphi$ is injective, we say that $\{r_{i}\mid i\in I\}$ is \emph{differentially algebraically independent} over $F$. Otherwise $\{r_{i}\mid i\in I\}$ is \emph{differentially algebraically dependent} over $F$. The field of fractions of $F\left\lbrace r_i\;\middle| \; i\in I\right\rbrace$, denoted by $F\left\langle r_i\;\middle| \; i\in I\right\rangle$, is a differential field with derivation defined by the quotient rule. Given differential indeterminates $x_i$ over $F$, the elements of $F\left\lbrace x_i\;\middle| \; i\in I\right\rbrace$ and $F\left\langle x_i\;\middle| \; i\in I\right\rangle$ are called \emph{differential polynomials} and \emph{differential rational functions} over $F$, respectively.

Let $K/F$ be a differential field extension. A subset $\pmb{r}\subseteq K$ \emph{generates} $K/F$ if $K=F\anglebrac{\pmb{r}}$, and $K/F$ is \emph{finitely generated} if $K=F\anglebrac{\pmb{r}}$ for some finite set $\pmb{r}$. A \emph{differential transcendence basis} of $K/F$ is a differentially algebraically independent subset of $K$ over $F$ that is maximal with respect to inclusion. We say that $K/F$ is \emph{purely differentially transcendental} if $K$ contains a differential transcendence basis over $F$ that generates the extension. By \cite[Section 2.9, Theorem 4(c)]{kolchin1973differential}, any two differential transcendence bases of $K/F$ have the same cardinality. The \emph{differential transcendence degree} of $K/F$, denoted by $\trdeg^\delta_F K$, is the cardinality of any given differential transcendence basis of $K/F$.

In this language, $\gamma(n) = \min \dtrdeg_F K_0$ where the minimum is taken over those differential fields $K_0$ that contain both $F$ and the coefficients of some equation gauge equivalent to the general $n\times n$ matrix differential equation.

The following result on finitely generated extensions is needed later.

\begin{prop}[\cite{kolchin1973differential}, Section 2.11, Proposition 14, characteristic $p = 0$ case]\label{prop:fg}

Let~$L/M/K$ be differential field extensions. If $L/K$ is a finitely generated differential field extension, then $M/K$ is a finitely generated differential field extension.
\end{prop}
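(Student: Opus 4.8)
The plan is to adapt the classical argument that an intermediate field $M$ of a finitely generated field extension $L/K$ is itself finitely generated over $K$, replacing ordinary transcendence degree by $\dtrdeg$ and replacing ``algebraic'' by ``differentially algebraic,'' and then to reduce the differential statement to its classical counterpart. Since $L/K$ is finitely generated, $\dtrdeg_K L$ is finite, so $\dtrdeg_K M\le\dtrdeg_K L<\infty$. I would first pick a differential transcendence basis $t_1,\dots,t_d$ of $M/K$ and, using the exchange property underlying \cite[Section 2.9, Theorem 4]{kolchin1973differential}, extend it to a differential transcendence basis $t_1,\dots,t_d,s_1,\dots,s_e$ of $L/K$. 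Setting $E=K\anglebrac{t_1,\dots,t_d}$, the extension $M/E$ is differentially algebraic, and it suffices to prove $M/E$ is finitely generated, for then $M=K\anglebrac{t_1,\dots,t_d,\dots}$ is finitely generated over $K$.

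Next I would set $F_0=E\anglebrac{s_1,\dots,s_e}$. Because $t_1,\dots,t_d,s_1,\dots,s_e$ is a differential transcendence basis of $L/K$, the extension $L/F_0$ is differentially algebraic; being also finitely generated as a differential extension, each differential generator contributes only finitely many derivatives before an algebraic relation appears, so $L/F_0$ is in fact finitely generated as an \emph{ordinary} field extension. The purpose of this step is to quotient out all of the differential transcendence at once, after which the problem becomes a statement about ordinary field extensions, where the classical theorem is available.

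I would then show that $M$ and $F_0$ are linearly disjoint over $E$. Since $M/E$ is differentially algebraic and $s_1,\dots,s_e$ are differentially independent over $E$, additivity of $\dtrdeg$ in towers forces $s_1,\dots,s_e$ to remain differentially independent over $M$; equivalently, all derivatives $s_i^{(j)}$ are algebraically independent over $M$. As $F_0=E(s_i^{(j)}\mid i,j)$ is purely transcendental on these elements, this algebraic independence over $M$ is precisely linear disjointness of $M$ and $F_0$ over $E$, i.e.\ the map $M\otimes_E F_0\to MF_0$ is injective. Applying the classical result to the tower $F_0\subseteq MF_0\subseteq L$ --- legitimate because $L/F_0$ is finitely generated as an ordinary field extension --- shows that $MF_0/F_0$ is finitely generated.

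The final and most delicate step is to descend finite generation from $MF_0/F_0$ to $M/E$. Writing $M$ as the directed union of its finitely generated differential subextensions $M_0/E$, finite generation of $MF_0/F_0$ forces $MF_0=M_0F_0$ for a single such $M_0$. Linear disjointness then yields $M\cap M_0F_0=M_0$: choosing an $E$-basis of $F_0$ that contains $1$, it serves simultaneously as an $M$-basis of $MF_0$ and an $M_0$-basis of $M_0F_0$, and comparing the unique coordinates of an element of $M$ against those of an element of $M_0F_0$ forces it into $M_0$. Hence $M=M\cap MF_0=M\cap M_0F_0=M_0$ is finitely generated over $E$, and therefore over $K$. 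I expect this descent to be the main obstacle, since it is exactly the point where the differential setting departs from the classical one: unlike finite transcendence degree, finiteness of $\dtrdeg$ does not by itself yield finite generation, so one must genuinely import the classical theorem through the compositum $MF_0$ and then confirm that its finite generation descends to $M$ rather than hiding in the auxiliary differentially transcendental elements $s_1,\dots,s_e$.
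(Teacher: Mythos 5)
The paper does not prove this statement at all: it is quoted verbatim from Kolchin (Section II.11, Proposition 14), so there is no in-paper argument to compare against, and your proposal must be judged as a free-standing proof. Its architecture is sound: reduce to the differentially algebraic extension $M/E$ over $E=K\anglebrac{t_1,\dots,t_d}$; note that in characteristic zero a finitely generated, differentially algebraic differential extension is finitely generated as an \emph{ordinary} field extension (differentiating the first algebraic relation satisfied by a generator shows, by separability, that all higher derivatives lie in a fixed finitely generated subfield); import the classical intermediate-field theorem through the compositum with $F_0=E\anglebrac{s_1,\dots,s_e}$, which is legitimate because the tower formula for $\dtrdeg$ keeps $s_1,\dots,s_e$ differentially independent over $M$ and hence makes $M$ and $F_0$ linearly disjoint over $E$; and descend by a directed-union argument. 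All of these steps are correct.

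One step is misstated, although the conclusion you draw from it survives. An $E$-basis $B$ of $F_0$ containing $1$ is \emph{not} an $M$-basis of the compositum field $MF_0$ when $F_0/E$ is transcendental: linear disjointness makes $B$ an $M$-basis only of the subring $M[F_0]\cong M\otimes_E F_0$, which is a proper subring of $MF_0$ (for instance $1/(x+y)$ does not lie in $\mathbb Q(y)\otimes_{\mathbb Q}\mathbb Q(x)$ inside $\mathbb Q(x,y)$). The identity $M\cap M_0F_0=M_0$ that you need still holds, but you must clear denominators: given $x\in M\cap M_0F_0$, write $x=p/q$ with $p,q\in M_0[F_0]$, expand $p=\sum_i a_ib_i$ and $q=\sum_i c_ib_i$ over $B$ with $a_i,c_i\in M_0$, and compare coefficients of the equation $xq=p$ inside $M[F_0]$, where $B$ genuinely is an $M$-basis; this yields $xc_i=a_i$ for all $i$, and since some $c_i\neq 0$ you get $x=a_i/c_i\in M_0$. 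With that repair the argument is complete and gives a correct proof of the proposition the paper only cites.
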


\subsection{Picard-Vessiot extensions}\label{subsection:pv}
In this subsection, we assume the field of constants $C$ of $F$ is algebraically closed. An \emph{algebraic variety} is a variety over $C$ in the classical sense as in \cite{springer1994linear}. A \emph{linear algebraic group} is an affine algebraic group.

Let $\delta(Y)=AY$ be an $n\times n$ matrix differential equation over $F$. 
A \emph{Picard-Vessiot field} for $\delta(Y)=AY$ over $F$ is a differential field $K$ containing $F$ such that $C_K = C$ and such that $K = F\anglebrac{z_{ij}\mid 1\le i, j\le n}$ for some matrix $Z = (z_{ij})\in \GL_{n}(K)$ satisfying $\delta(Z)=AZ$. Such a matrix $Z$ is called a \emph{fundamental solution matrix} of the equation. By \cite[Proposition 1.20]{van2012galois}, a Picard-Vessiot field for $\delta(Y) = AY$ over $F$ exists and is unique up to differential isomorphism.

Let $K$ be a Picard-Vessiot field over $F$ for an $n\times n$ matrix differential equation $\delta(Y)=AY$ with fundamental solution matrix $Z$. The \emph{differential Galois group} of $K/F$ is the group $\Dgal(K/F)$ of differential $F$-algebra automorphisms of $K$. Let $G=\Dgal(K/F)$. By \cite[Observations 1.26(2)]{van2012galois}, $\sigma\in G$ acts on $Z$ entrywise and there exists a matrix $C(\sigma)\in\GL_n(C)$ such that $\sigma(Z) = Z\, C(\sigma)$. This gives an injective group homomorphism $\varphi:G\rightarrow \GL_n(C)$ via $\sigma\mapsto C(\sigma)$. By \cite[Theorem 1.27]{van2012galois}, $G$ has the structure of a linear algebraic group and the map $\varphi$ extends to a homomorphism of algebraic groups $G\to \GL_n$ over $C$.

For a linear algebraic group $G$ over $C$, a \emph{$G$-Picard-Vessiot extension} is a Picard-Vessiot field $K$ over $F$ equipped with an isomorphism of algebraic groups $G\to \Dgal(K/F)$, and we view $G$ as acting on $K$ via $\Dgal(K/F)$. Such an extension satisfies the equality $\trdeg_F K = \dim(G)$ by \cite[Corollary 1.30(3)]{van2012galois}. If $G$ is finite, the $G$-Picard-Vessiot extensions over $F$ are precisely the $G$-Galois extensions over $F$ by \cite[Exercise 1.24]{van2012galois} and \cite[Proposition 1.34(3)]{van2012galois}.

Let $K/F$ be a $G$-Picard-Vessiot extension. By \cite[Proposition 1.34]{van2012galois}, there is a Galois correspondence between the intermediate differential fields $K/M/F$ and the closed subgroups $H$ of $G$, given by taking $M$ to $\Dgal(K/M)$ and $H$ to $K^H$. If $H$ is a normal closed subgroup of $G$, then $K^H/F$ is a $(G/H)$-Picard-Vessiot extension. If $G^{\circ}$ is the connected component of $G$, then $K^{G^{\circ}}/F$ is both a finite Galois extension with Galois group $G/G^{\circ}$ and the algebraic closure of $F$ in $K$.

\begin{ex}\label{ex:general-de}
Let $L$ be a Picard-Vessiot field over $K = F\langle a_{ij}\rangle$ for the general matrix differential equation $\delta(Y)=AY$. This $L$ is generated by the $n^2$ entries of a fundamental solution matrix $Y_0$ for $\delta(Y)=AY$. Since $\dtrdeg_FL = \dtrdeg_FK = n^2$, the entries of $Y_0$ are differentially algebraically independent over $F$. Thus right matrix multiplication of $Y_0$ by elements $\sigma\in \GL_n(C)$ defines differential automorphisms $\sigma$ of $L$ over $F$. The computation
$$A.\sigma = (\delta(Y_0)\cdot Y_0^{-1}).\sigma = (\delta(Y_0).\sigma) \cdot (Y_0.\sigma)^{-1} = \delta(Y_0)\cdot \sigma\cdot \sigma^{-1}\cdot Y_0^{-1} = A$$
of applying $\sigma$ entrywise to $A$ shows that $\sigma$ fixes the entries of $A$ and hence $K$. So $\Dgal(L/K) = \GL_n(C)$.
\end{ex}

\section{Differential essential dimension}\label{section:ded}

This section defines the differential essential dimension of Picard-Vessiot extensions. We also compute this invariant for certain $G$-Picard-Vessiot extensions when $G = \mathbb G_{m}^{n}$, $\mathbb G_{a}^{n}$, and $(\mathbb Z/k \mathbb Z)^{n}$ for $k>0$ (see Corollary \ref{cor:ded-gm-ga}). 

In this and subsequent sections, we assume that the field of constants $C$ of the differential field $F$ is algebraically closed, and that any differential field in discussion contains $F$ and has field of constant $C$.

We first explain why we may assume $C$ to be algebraically closed when proving $\gamma(n)\ge n$. Suppose that the general $n\times n$ matrix differential equation $\delta(Y)=AY$ over the differential field
$$K\coloneqq F\anglebrac{a_{ij}\mid 1\le i,j\le n}$$
is gauge equivalent to some equation $\delta(Z)=BZ$ over a differential subfield $K_0$ containing $F$. By \cite[Proposition 2.3]{bachmayr2021large}, both $F' = \Frac(F\otimes_C \overline{C})$ and $K'_0=\Frac(K_0\otimes_C\overline{C})$ are differential fields with field of constants $\overline{C}$ (here $\overline{C}$ denotes the algebraic closure of $C$). Since $\dtrdeg_F K_0 \ge \dtrdeg_{F'} K_0'$, it suffices to show $\dtrdeg_{F'} K'_0\ge n$ to establish $\gamma(n)\ge n$. Thus we may assume $C$ to be algebraically closed. 

We now explain how to describe $\gamma(n)$ in a new language. By Example \ref{ex:general-de}, the general matrix differential equation $\delta(Y)=AY$ determines a $\GL_n$-Picard-Vessiot extension $L/K$. Suppose that an equation $\delta(Z)=BZ$  gauge equivalent to $\delta(Y)=AY$ has coefficients in a differential subfield $K_0$ of $K$ containing $F$. Then $\delta(Z)=BZ$ determines a $\GL_n$-Picard-Vessiot extension $L_0/K_0$ with $L_0\subseteq L$ such that the compositum $L_0K$ equals $L$. This motivates the following definition.

\begin{defi}\label{def:ded}
Suppose that $L/K$ and $L_0/K_0$ are $G$-Picard-Vessiot extensions such that $L_0\subseteq L$ and $K_0\subseteq K$. If $L_0K = L$, the extension $L/K$ \emph{descends} to $L_0/K_0$, and $K_0$ is a \emph{differential field of definition} of $L/K$.

The \emph{differential essential dimension} of a $G$-Picard-Vessiot extension $L/K$ is the smallest differential transcendence degree of a differential field of definition of $L/K$. We denote this number by $\ded_F(L/K)$.
\end{defi}

The preceding discussion immediately gives

\begin{prop}\label{prop:gamma=GL}
Let $L/K$ be the Picard-Vessiot extension for the general $n\times n$ matrix differential equation. Then $\gamma(n)\ge \ded_F(L/K)$.
\end{prop}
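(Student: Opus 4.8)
The plan is to unwind the definitions so that the inequality $\gamma(n) \ge \ded_F(L/K)$ becomes essentially a tautology, modulo the translation between the gauge-equivalence language of the introduction and the Picard-Vessiot language of Definition \ref{def:ded}. Recall that $\gamma(n) = \min \dtrdeg_F K_0$, where the minimum ranges over all differential fields $K_0$ with $F \subseteq K_0 \subseteq K$ such that $K_0$ contains the coefficients of some equation $\delta(Z) = BZ$ gauge equivalent to the general matrix equation $\delta(Y) = AY$. On the other hand, $\ded_F(L/K)$ is the minimum of $\dtrdeg_F K_0$ over all differential fields of definition $K_0$ of $L/K$. So it suffices to show that \emph{every} $K_0$ achieving the minimum for $\gamma(n)$ is in fact a differential field of definition of $L/K$; then the minimum defining $\ded_F(L/K)$ is taken over a set at least as large, and the inequality follows.

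First I would fix a $K_0$ attaining $\gamma(n)$, so $K_0$ contains $F$ and the coefficients $b_{ij}$ of an equation $\delta(Z) = BZ$ that is gauge equivalent over $K$ to $\delta(Y) = AY$, with $\dtrdeg_F K_0 = \gamma(n)$. I would then invoke the discussion immediately preceding Definition \ref{def:ded}: since $B$ has entries in $K_0$, the equation $\delta(Z) = BZ$ has a Picard-Vessiot field $L_0$ over $K_0$, and by Example \ref{ex:general-de} applied in this setting it carries the structure of a $\GL_n$-Picard-Vessiot extension $L_0/K_0$. The key geometric fact, already asserted in the excerpt, is that $L_0 \subseteq L$ and that the compositum satisfies $L_0 K = L$. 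This is exactly the content of the descent condition $L_0 K = L$ in Definition \ref{def:ded}, so $K_0$ is a differential field of definition of $L/K$.

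Having shown that the $K_0$ realizing $\gamma(n)$ is a differential field of definition of $L/K$, I would conclude by the minimality in the definition of differential essential dimension: $\ded_F(L/K) \le \dtrdeg_F K_0 = \gamma(n)$, which is the desired inequality $\gamma(n) \ge \ded_F(L/K)$.

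The one step I expect to require genuine care, rather than pure bookkeeping, is verifying the claim $L_0 K = L$ together with the fact that $L_0/K_0$ really is a $\GL_n$-Picard-Vessiot extension with the \emph{same} group $\GL_n$. Concretely, one must check that a fundamental solution matrix $Z_0$ for $\delta(Z) = BZ$ and a fundamental solution matrix $Y_0$ for $\delta(Y) = AY$ differ by a gauge matrix $P \in \GL_n(K)$ in such a way that the solutions generate the same extension over the compositum, so that $L_0 K = L$ and no constants are added (i.e. $C_{L_0} = C$). This is where the hypothesis that $\delta(Z) = BZ$ is gauge equivalent to $\delta(Y) = AY$ \emph{over $K$}, as opposed to over some larger field, is essential, and where one leans on the existence and uniqueness of Picard-Vessiot fields (\cite[Proposition 1.20]{van2012galois}) and on the computation in Example \ref{ex:general-de} identifying the differential Galois group as $\GL_n$. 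Everything else is a direct comparison of the two minima.
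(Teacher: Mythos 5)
Your proposal matches the paper's argument, which itself consists only of the remark that the discussion preceding Definition \ref{def:ded} ``immediately gives'' the proposition: one checks that any $K_0$ containing the coefficients of a gauge-equivalent equation is a differential field of definition, because the fundamental solution matrices differ by $P\in\GL_n(K)$, so $L_0\subseteq L$, $L_0K=L$, and $\Dgal(L_0/K_0)=\GL_n$. The only loose point is your appeal to Example \ref{ex:general-de} for the group of $L_0/K_0$ (its argument needs the entries of $B$ to be differentially algebraically independent, which need not hold); the clean justification is that $L_0K=L$ forces the restriction map $\Dgal(L/K)\to\Dgal(L_0/K_0)$ to be injective, hence surjective onto a group of dimension at most $n^2$ -- but you flag exactly this step as the one needing care, and the paper asserts it with no more detail.
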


Let $L/K$ and $L_{0}/K_{0}$ be Picard-Vessiot extensions with $L_0 \subseteq L$ and $K_{0} \subseteq K$. We define the restriction map 
$$\res:\Dgal(L/K)\to\Dgal(L_0/K_0)$$ to
take a differential automorphism $\sigma$ to its restriction $\sigma\vert_{L_0}$. 

This restriction map is a homomorphism of algebraic groups. To see this, let $Y$ and $Z$ be fundamental solution matrices for some $m\times m$ and $n\times n$ matrix differential equations associated to $L/K$ and $L_0/K_0$, respectively. Suppose that $\sigma\in \Dgal(L/K)$ and $\tau\in \Dgal(L_0/K_0)$ act on $Y$ and $Z$ entrywise by $\sigma(Y) = Y\, C(\sigma)$ and $\tau(Z) = Z\, D(\tau)$. Then $\Dgal(L/K)$ embeds into $\GL_{m+n}(C)$ taking $\sigma$ to $\diag(C(\sigma),D(\res(\sigma)))$, and $\Dgal(L_0/K_0)$ embeds into $\GL_{n}(C)$ taking $\tau$ to $D(\tau)$. Under this identification, the restriction map is the polynomial map that takes the block diagonal matrix $\diag(C(\sigma),D(\res(\sigma)))$ to $D(\res(\sigma))$.

We can express descent in other ways.

\begin{lemma}\label{lem:descent-tfae}
Let $L/K$ and $L_0/K_0$ be $G$-Picard-Vessiot extensions with $L_0\subseteq L$ and $K_0\subseteq K$. The following are equivalent:
\begin{enumerate}
    \item\label{descent-1} $L/K$ descends to $L_0/K_0$.
    \item\label{descent-2} The restriction map $\res:\Dgal(L/K)\to\Dgal(L_0/K_0)$
    is injective.
    \item\label{descent-3} The restriction map is bijective.
    \item\label{descent-4} There exists a $G$-action on $L_0$ such that the inclusion $L_0\subseteq L$ is $G$-equivariant.
\end{enumerate}
\end{lemma}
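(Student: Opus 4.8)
The plan is to route everything through the identity $\ker(\res) = \Dgal(L/L_0K)$, which converts injectivity of $\res$ into the triviality of a Galois subgroup and hence into descent. First I would check that $\res$ is even well-defined, i.e.\ that every $\sigma\in\Dgal(L/K)$ maps $L_0$ onto $L_0$: if $Z$ is a fundamental solution matrix for the equation $\delta(Z)=BZ$ defining $L_0/K_0$, then $\sigma$ fixes the coefficients of that equation (they lie in $K_0\subseteq K$), so $\delta(\sigma(Z)) = B\,\sigma(Z)$ makes $\sigma(Z)$ another fundamental solution matrix, whence $\sigma(Z) = ZM$ for some $M\in\GL_n(C)$ and the entries of $\sigma(Z)$ remain in $L_0 = K_0\anglebrac{z_{ij}}$. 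Applying the same argument to $\sigma^{-1}$ gives $\sigma(L_0) = L_0$, so $\sigma\vert_{L_0}\in\Dgal(L_0/K_0)$.

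Next I would record the key computation. The compositum $L_0K$ is an intermediate differential field $K\subseteq L_0K\subseteq L$ (a subfield of $L$, so $C_{L_0K} = C$ automatically), and hence corresponds under the Galois correspondence for $L/K$ to the closed subgroup $\Dgal(L/L_0K)$. A direct check shows $\ker(\res) = \Dgal(L/L_0K)$: an element $\sigma\in\Dgal(L/K)$ lies in $\ker(\res)$ iff it fixes $L_0$ pointwise iff, since it already fixes $K$, it fixes $L_0K$ pointwise. The equivalence of \eqref{descent-1} and \eqref{descent-2} now drops out: $L/K$ descends to $L_0/K_0$ means $L_0K = L$, which by the Galois correspondence means $\Dgal(L/L_0K)$ is trivial, i.e.\ $\ker(\res)$ is trivial, i.e.\ $\res$ is injective. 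The implication \eqref{descent-3} $\Rightarrow$ \eqref{descent-2} is immediate.

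For \eqref{descent-2} $\Rightarrow$ \eqref{descent-3} I would transport $\res$ across the given isomorphisms $G\cong\Dgal(L/K)$ and $G\cong\Dgal(L_0/K_0)$ to view it as an injective homomorphism of algebraic groups $\alpha\colon G\to G$, and argue that such a map is automatically surjective. Its image $\alpha(G)$ is a closed subgroup of $G$ of dimension $\dim G$ (injectivity forces trivial kernel, and in characteristic zero the kernel is reduced), and $\alpha$ restricts to an isomorphism $G\cong\alpha(G)$. Full dimension forces $\alpha(G)^{\circ} = G^{\circ}$; since $\alpha$ then induces a bijection of the finitely many connected components, $\lvert\alpha(G)/G^{\circ}\rvert = \lvert G/G^{\circ}\rvert$ while $\alpha(G)/G^{\circ}\le G/G^{\circ}$, so $\alpha(G) = G$ and $\res$ is bijective.

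Finally I would handle \eqref{descent-3} $\Leftrightarrow$ \eqref{descent-4}. Given the $G$-action on $L$ coming from an isomorphism $\phi\colon G\to\Dgal(L/K)$, equivariance of the inclusion forces $g\cdot x = \phi(g)(x)$ for $x\in L_0$, so the only candidate action on $L_0$ is $g\cdot x \coloneqq \res(\phi(g))(x)$; this is a well-defined action by $K_0$-differential automorphisms precisely because each $\phi(g)$ preserves $L_0$. This action exhibits $L_0/K_0$ as a $G$-Picard-Vessiot extension exactly when $\res\circ\phi\colon G\to\Dgal(L_0/K_0)$ is an isomorphism, i.e.\ when $\res$ is bijective, giving \eqref{descent-3} $\Leftrightarrow$ \eqref{descent-4}. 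I expect the only real obstacle to be the surjectivity step \eqref{descent-2} $\Rightarrow$ \eqref{descent-3}: injectivity alone does not force surjectivity for a general morphism of algebraic groups, and one genuinely has to use that source and target are (isomorphic to) the same $G$ with its finitely many components, so that the component-counting argument closes the gap.
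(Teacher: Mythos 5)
Your proof is correct and follows essentially the same route as the paper: the identity $\ker(\res)=\Dgal(L/L_0K)$ for \eqref{descent-1}$\Leftrightarrow$\eqref{descent-2}, a closed-image, full-dimension, component-counting argument for \eqref{descent-2}$\Rightarrow$\eqref{descent-3}, and composing with (respectively factoring through) $\res$ for \eqref{descent-3}$\Leftrightarrow$\eqref{descent-4}. The only addition is your explicit check that $\res$ is well defined, which the paper leaves implicit in its discussion of the restriction map preceding the lemma.
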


\begin{proof}
(\ref{descent-1}) $\Leftrightarrow$ (\ref{descent-2}). The kernel $\ker(\res) = \Dgal(L/L_{0}K)$ is trivial exactly when $L = L_0K$.

(\ref{descent-3}) $\Rightarrow$ (\ref{descent-2}) is immediate.

(\ref{descent-2}) $\Rightarrow$ (\ref{descent-3}). Identify $G$ with $\Dgal(L_0/K_0)$. By \cite[Proposition 2.2.5(ii)]{springer1994linear}, the image $G'$ of the restriction map is a closed subgroup of $G$. By assumption (\ref{descent-2}), $G$ and $G'$ are isomorphic and have the same dimension as varieties. The algebraic group $G$ is then a finite disjoint union of left cosets of $G'$. But $G$ and $G'$ have the same number of connected components, so $G = G'$.

(\ref{descent-3}) $\Rightarrow$ (\ref{descent-4}). Let $G$ act on $L_0$ via the composition of the map $G\to \Dgal(L/K)$ defining the $G$-action on $L$ and the restriction map.

(\ref{descent-4}) $\Rightarrow$ (\ref{descent-3}). The $G$-equivariance of the inclusion $L_0 \subseteq L$ gives the commutative diagram
$$
\begin{tikzcd}[row sep=1em,column sep=0.5em]
& G\arrow[dl]\arrow[dr]\\
\Dgal(L/K)\arrow[rr,"\res"]&& \Dgal(L_0/K_0).
\end{tikzcd}
$$
Here the diagonal maps are the isomorphisms defining the $G$-actions on $L$ and $L_0$, so the restriction map is a bijection.
\end{proof}

We now bound the differential essential dimension of Picard-Vessiot subextensions.

\begin{cor}\label{cor:descent-subPV}
Suppose that a $G$-Picard-Vessiot extension $L/K$ descends to an extension $L_0/K_0$. 
\begin{enumerate}[label={(\arabic*)}]
    \item\label{descent-ded-1} If $H$ is a closed subgroup of $G$, the $H$-Picard-Vessiot subextension $L/L^H$ descends to $L_0/L_0^H$. In particular, $\ded_F(L/L^H)\le \ded_{F}(L/K)$.
    \item\label{descent-ded-2} If $H$ is a closed normal subgroup of $G$, the $(G/H)$-Picard-Vessiot subextension $L^H/K$ descends to $L_0^H/K_{0}$. In particular, $\ded_F(L^H/K)\le \ded_{F}(L/K)$.
\end{enumerate}
\end{cor}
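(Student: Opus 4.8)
The plan is to deduce both parts from the equivariance criterion for descent, namely Lemma~\ref{lem:descent-tfae}(\ref{descent-4}), applied to the appropriate Picard-Vessiot subextensions. Since $L/K$ descends to $L_0/K_0$, that criterion provides a $G$-action on $L_0$ — the action through the isomorphism $G\cong\Dgal(L_0/K_0)$ — for which the inclusion $L_0\subseteq L$ is $G$-equivariant. This one equivariant inclusion is the object I would transport down to each subextension.

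For part \ref{descent-ded-1}, I would simply restrict the $G$-action to the closed subgroup $H$. Applying the Galois correspondence to $L/K$ and to $L_0/K_0$ shows that $L/L^H$ and $L_0/L_0^H$ are $H$-Picard-Vessiot extensions, with the $H$-action on $L_0$ being the restriction of its $G$-action. The $G$-equivariance of $L_0\subseteq L$ specializes to $H$-equivariance, which in particular forces $L_0^H\subseteq L^H$; Lemma~\ref{lem:descent-tfae}(\ref{descent-4}) then yields the descent of $L/L^H$ to $L_0/L_0^H$. One can also argue directly from criterion (\ref{descent-1}): since $K\subseteq L^H$, we have $L_0L^H\supseteq L_0K=L$, so $L_0L^H=L$.

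For part \ref{descent-ded-2}, normality of $H$ is what makes everything work. Because $H$ is normal, $G$ preserves the fixed fields $L^H\subseteq L$ and $L_0^H\subseteq L_0$, so the two $G$-actions descend to $(G/H)$-actions on $L^H$ and $L_0^H$; by the Galois correspondence these are precisely the Picard-Vessiot actions identifying $G/H$ with $\Dgal(L^H/K)$ and with $\Dgal(L_0^H/K_0)$. Restricting the $G$-equivariant inclusion $L_0\subseteq L$ to fixed fields produces a $(G/H)$-equivariant inclusion $L_0^H\subseteq L^H$, and Lemma~\ref{lem:descent-tfae}(\ref{descent-4}) again delivers the descent of $L^H/K$ to $L_0^H/K_0$. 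I expect the only real subtlety to sit here: one must confirm that the $(G/H)$-action obtained by restricting the ambient $G$-action to the fixed field coincides with the intrinsic Picard-Vessiot $(G/H)$-action, and this is exactly the point where the normality of $H$ is used.

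Finally I would read off the two inequalities by taking $L_0/K_0$ to realize $\ded_F(L/K)=\dtrdeg_F K_0$. For part \ref{descent-ded-2} the descent of $L^H/K$ to $L_0^H/K_0$ exhibits $K_0$ itself as a differential field of definition of $L^H/K$, giving $\ded_F(L^H/K)\le\dtrdeg_F K_0=\ded_F(L/K)$. For part \ref{descent-ded-1} the field of definition produced is $L_0^H$ rather than $K_0$, so I must compare their differential transcendence degrees. Here I would invoke that $L_0/K_0$ is Picard-Vessiot, so $\trdeg_{K_0}L_0=\dim G$ is finite and hence $\dtrdeg_{K_0}L_0=0$; since $K_0\subseteq L_0^H\subseteq L_0$, additivity and monotonicity of differential transcendence degree give $\dtrdeg_F L_0^H=\dtrdeg_F K_0$, and therefore $\ded_F(L/L^H)\le\ded_F(L/K)$.
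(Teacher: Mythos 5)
Your proposal is correct and follows essentially the same route as the paper: pass to the $G$-equivariant inclusion $L_0\subseteq L$ via Lemma~\ref{lem:descent-tfae}, restrict to $H$ (resp.\ take $H$-fixed fields using normality) to get the descent, and then read off the inequalities from $\dtrdeg_F L_0^H=\dtrdeg_F K_0$. You in fact supply a justification for that last equality (finiteness of $\trdeg_{K_0}L_0$) that the paper leaves implicit.
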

\begin{proof}
\ref{descent-ded-1}. By Lemma \ref{lem:descent-tfae}, we may assume the inclusion $L_0 \subseteq L$ is $G$-equivariant and thus $H$-equivariant. Therefore $L/L^H$ descends to $L_0/L_0^H$ and
$$\dtrdeg_F K_0 = \dtrdeg_{F}L_0^H\ge\ded_F(L/L^H).$$
Taking the infimum over all differential fields of definition $K_0$ of $L/K$ gives $\ded_F(L/K)\ge \ded_F(L/L^H)$.

\ref{descent-ded-2}. If $H$ is normal in $G$, the above $G$-equivariant inclusion $L_0 \subseteq L$ restricts to a $(G/H)$-equivariant inclusion $L_0^H \subseteq L^H$ so $L^H/K$ descends to $L_0^H/K_0$. We likewise deduce  $\ded_{F}(L/K)\ge \ded_F(L^H/K)$.
\end{proof}

We next find the differential essential dimensions of some extensions.

\begin{prop}\label{prop:ded-gm-ga}
Let $G$ be a linear algebraic group over $C$ and let $u(Y)$ be a differential rational function over $C$. Suppose the following three conditions are satisfied for any differential field $E$ over $F$ with field of constants $C$:
\begin{enumerate}[label={(\arabic*)}]
    \item\label{cond:1} For $n\ge 0$, any $G^n$-Picard-Vessiot extension over $E$ has the form $E\anglebrac{ y_1,...,y_n}$ with $u(y_i)\in E$ for $i=1,...,n$.
    \item\label{cond:2} Suppose that $E\anglebrac{y}$ and $E\anglebrac{z}$ give the same $G$-Picard-Vessiot extension over $E$ and satisfy $u(y),u(z)\in E$. Then there exists a differential polynomial $v(Y,Z)$ over $F$ such that $v(y,z)$ is a nonzero element in $E$. Moreover there exists an automorphism $\phi$ of $E\anglebrac{z}$ for which $u(\phi(T)) = u(T)$ and $v(\phi(T),z)\neq v(T,z)$.
    \item\label{cond:3} For $n\ge 0$ and differential indeterminates $y_{1},...,y_{n}$ over $F$, the extension $L=F\anglebrac{y_{1},...,y_{n}}$ over $K=F\anglebrac{ u(y_{1}),...,u(y_{n})}$ is a $G^n$-Picard-Vessiot extension.
\end{enumerate}
Then the extension $L/K$ in condition \ref{cond:3} satisfies $\ded_F(L/K) = n$ for all $n\ge 0$.
\end{prop}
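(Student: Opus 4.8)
The plan is to establish the two inequalities $\ded_F(L/K)\le n$ and $\ded_F(L/K)\ge n$ separately. For the upper bound I would first pin down $\dtrdeg_F K$. Since the $y_i$ are differential indeterminates, $\dtrdeg_F L=n$, and by condition \ref{cond:3} the extension $L/K$ is a $G^n$-Picard--Vessiot extension, hence has finite transcendence degree over $K$; therefore $\dtrdeg_F K=\dtrdeg_F L=n$, and since $K=F\anglebrac{u(y_1),\dots,u(y_n)}$ is generated by $n$ elements, the $u(y_i)$ form a differential transcendence basis of $K/F$. Taking $K_0=K$ and $L_0=L$ exhibits $K$ as a differential field of definition of $L/K$, so $\ded_F(L/K)\le \dtrdeg_F K=n$.

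For the lower bound, let $K_0$ be any differential field of definition, with $L_0/K_0$ the descended $G^n$-Picard--Vessiot extension; I must show $\dtrdeg_F K_0\ge n$. By Lemma \ref{lem:descent-tfae} I may assume the inclusion $L_0\subseteq L$ is $G^n$-equivariant, and by condition \ref{cond:1} I may write $L_0=K_0\anglebrac{w_1,\dots,w_n}$ with $u(w_i)\in K_0$ for each $i$. Since the $u(w_i)$ lie in $K_0$, it suffices to prove that these $n$ elements are differentially algebraically independent over $F$, for then $\dtrdeg_F K_0\ge \dtrdeg_F F\anglebrac{u(w_1),\dots,u(w_n)}=n$. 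To reduce to the one-variable situation of condition \ref{cond:2}, I would use the coordinate subgroups $H_i=\prod_{j\ne i}G\le G^n$: equivariance places $w_i$ in $L^{H_i}=K\anglebrac{y_i}$, and a faithfulness argument (the $i$-th factor acts faithfully on $L_0$ while fixing every $w_j$ with $j\ne i$ and all of $K_0$) identifies $K\anglebrac{w_i}=K\anglebrac{y_i}$. Thus, with $E=K$, the elements $y_i$ and $w_i$ generate the same $G$-Picard--Vessiot extension over $E$ and satisfy $u(y_i),u(w_i)\in E$, so condition \ref{cond:2} applies in each coordinate.

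The heart of the argument, and the step I expect to be the main obstacle, is to convert a hypothetical differential-algebraic dependence among $u(w_1),\dots,u(w_n)$ into a contradiction. Assuming such a dependence, $\dtrdeg_F K_0<n=\dtrdeg_F K$, so $K/K_0$ carries genuine differential transcendence; the plan is to produce a differential specialization of $K$ that fixes $K_0$, and hence every $u(w_i)$ and the whole structure of $L_0$, while moving the equation over $K$. Condition \ref{cond:2} supplies the rigidity that is supposed to obstruct this: it gives a nonzero comparison element $0\ne v_i(y_i,w_i)\in K$ together with an automorphism $\phi_i$ preserving the $u$-data but changing $v_i$. Combining the specialization with the $\phi_i$ should yield an automorphism of $L$ that respects $L_0$ yet is forced both to fix and to move the element $v_i(y_i,w_i)\in K=L_0K$, which is absurd. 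This contradiction would force the $u(w_i)$ to be differentially independent and finish the lower bound.

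I would finally record the bookkeeping the conditions demand: all fields in play have field of constants $C$, the extensions $L/K$, $L_0/K_0$ and their coordinate subextensions are finitely generated, so that Proposition \ref{prop:fg} and the Galois correspondence apply, and the passage between $K_0$ and $F\anglebrac{u(w_1),\dots,u(w_n)}$ preserves the $G^n$-Picard--Vessiot and field-of-definition properties via conditions \ref{cond:1} and \ref{cond:3}. The genuinely hard point remains the gluing in the previous paragraph, where the global symmetry assembled from the coordinatewise data of condition \ref{cond:2} must be made consistent on the compositum $L=L_0K$; everything else is formal consequence of equivariance, the coordinate-subgroup decomposition, and the transcendence-degree computation.
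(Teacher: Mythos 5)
Your setup (the upper bound, the reduction via the coordinate normal subgroups $H_i=\prod_{j\ne i}G$ to one-variable subextensions $K\anglebrac{y_i}/K$ descending to $K_0\anglebrac{w_i}/K_0$, and the observation that condition \ref{cond:2} then applies with $E=K$) does track the paper's proof. But the step you yourself flag as ``the genuinely hard point'' is exactly the step that constitutes the proof, and what you sketch for it does not work as stated. You propose to derive the contradiction by producing ``a differential specialization of $K$ that fixes $K_0$'' and gluing it with the automorphisms $\phi_i$ into an automorphism of $L=L_0K$ that both fixes and moves $v_i(y_i,w_i)$. Nothing in the hypotheses of this proposition supplies such a specialization: Roquette-type specialization arguments appear in the paper only later (Proposition \ref{prop:fg-embed}), and there they require the extra assumption that $F$ has uncountable differential transcendence degree over some subfield, which is not available here. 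You also give no construction of the glued automorphism, no check that it is well defined on the compositum, and no actual derivation of the contradiction; ``should yield \dots which is absurd'' is a hope, not an argument.

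What the paper does at this point is quite different and purely a transcendence-degree count, driven by induction on a minimal counterexample $n$ (an ingredient absent from your plan). Minimality gives $\dtrdeg_F F\anglebrac{z_1,\dots,z_{n-1}}=n-1$, which forces $z_n$ to be differentially algebraic over $L'=F\anglebrac{y_1,\dots,y_{n-1}}$, so $L''\coloneqq L'\anglebrac{z_n}$ has differential transcendence degree $n-1$. Then the nonzero element $d=v(y_n,z_n)\in K=K'\anglebrac{u(y_n)}$ is rewritten as $d=f(u(y_n))$ for a nonzero differential rational function $f$ over $K'$, and $g(T)\coloneqq f(u(T))-v(T,z_n)$ is a differential rational function over $L''$ annihilating $y_n$; it is \emph{nontrivial} precisely because the automorphism $\phi$ from condition \ref{cond:2} preserves $u$ (hence $f(u(T))$) but moves $v(T,z_n)$. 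Hence $y_n$ is differentially algebraic over $L''$ and $n=\dtrdeg_F L=\dtrdeg_F L''=n-1$, the desired contradiction. This use of the pair $(v,\phi)$ --- as a certificate that an explicit annihilating relation is nonzero, rather than as raw material for building a symmetry of $L$ --- is the missing idea; without it, or the induction that makes $\dtrdeg_F L''=n-1$, your outline does not close.
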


\begin{cor}\label{cor:ded-gm-ga}
Let $y_1,...,y_n$ be differential indeterminates over $F$ and let $L = F\anglebrac{ y_1,...,y_n}$. Then $\ded_F(L/L^{G^n}) \ge n$ for $G = \mathbb G_{m}$, $\mathbb G_{a}$, and $\mathbb Z/k\mathbb{Z}$ (for $k\ge 2$).
\end{cor}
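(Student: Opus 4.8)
```latex
The plan is to derive Corollary \ref{cor:ded-gm-ga} from Proposition \ref{prop:ded-gm-ga} by exhibiting, for each of the three groups $G = \Gm$, $\Ga$, and $\mathbb Z/k\mathbb Z$, a single differential rational function $u(Y)$ that makes all three hypotheses \ref{cond:1}--\ref{cond:3} of the proposition hold. Once such a $u$ is found, Proposition \ref{prop:ded-gm-ga} immediately yields $\ded_F(L/K) = n$ for the extension $L = F\anglebrac{y_1,\dots,y_n}$ over $K = F\anglebrac{u(y_1),\dots,u(y_n)}$. The remaining task is to identify $K$ with $L^{G^n}$: I would check that $K$ is exactly the fixed field of the natural $G^n$-action on $L$, so that the extension of condition \ref{cond:3} coincides with $L/L^{G^n}$, giving $\ded_F(L/L^{G^n}) \ge n$. (The proposition in fact gives equality, but the corollary only claims the lower bound, which is the nontrivial direction.)

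The heart of the argument is choosing $u$ correctly for each group. For $G = \Gm$, the classical invariant is the logarithmic derivative: I would take $u(Y) = \delta(Y)/Y$, since the equation $\delta(y) = a y$ is the order-one matrix equation whose Picard-Vessiot group is $\Gm$, and scaling $y \mapsto cy$ with $c \in C^\times$ fixes $u(y) = \delta(y)/y$. For $G = \Ga$, the relevant equation is $\delta(y) = a$ (integration), so I would take $u(Y) = \delta(Y)$; here the $\Ga$-action is $y \mapsto y + c$ for $c \in C$, which fixes $\delta(y)$. For $G = \mathbb Z/k\mathbb Z$, the extension is a Kummer-type extension given by $\delta(y)/y = \tfrac{1}{k}\,\delta(a)/a$, i.e. $y^k$ satisfies a first-order equation; a natural choice is $u(Y) = \delta(Y)/Y$ again, or $u(Y) = Y^k$ composed with a logarithmic derivative, and the $\mathbb Z/k\mathbb Z$-action is $y \mapsto \zeta y$ for a primitive $k$th root of unity $\zeta \in C$. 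In each case $u(y_i) \in K$ by construction, establishing \ref{cond:3}, and the structure of rank-one Picard-Vessiot extensions for these groups gives \ref{cond:1}.

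The step I expect to be the main obstacle is verifying condition \ref{cond:2}, which demands producing the differential polynomial $v(Y,Z)$ together with an automorphism $\phi$ satisfying $u(\phi(T)) = u(T)$ but $v(\phi(T),z) \neq v(T,z)$. Concretely, if $E\anglebrac{y}$ and $E\anglebrac{z}$ are the same $G$-Picard-Vessiot extension with $u(y), u(z) \in E$, then $y$ and $z$ differ by the $G$-action, so for $\Gm$ one has $y = cz$ with $c \in C^\times$, meaning $v(Y,Z) = Y/Z$ lands in $E$ (indeed in $C$). The automorphism $\phi$ is then the $G$-action $\phi(T) = c' T$ for a suitable nonidentity element, which preserves $u$ but changes the ratio $v(\cdot,z)$. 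The analogous computations for $\Ga$ (with $v(Y,Z) = Y - Z$ and $\phi(T) = T + c'$) and for $\mathbb Z/k\mathbb Z$ (with $v(Y,Z) = Y/Z$ and $\phi(T) = \zeta T$) follow the same template. The delicate points are confirming that $v(y,z)$ is genuinely nonzero and lies in $E$ (not merely in $L$), and that a nonidentity $\phi$ with the required separating property always exists, which reduces to the group $G$ being nontrivial in each case.
```
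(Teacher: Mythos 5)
Your overall strategy is exactly the paper's: derive the corollary from Proposition \ref{prop:ded-gm-ga} by choosing $u$ and verifying conditions \ref{cond:1}--\ref{cond:3}, and your choices $u(T)=\delta(T)/T$ for $\Gm$ and $u(T)=\delta(T)$ for $\Ga$ are the right ones. But your verification of condition \ref{cond:2} rests on a false premise. You assert that if $E\anglebrac{y}=E\anglebrac{z}$ with $u(y),u(z)\in E$ then ``$y$ and $z$ differ by the $G$-action,'' so that $y=cz$ (resp.\ $y=z+c$) for a constant $c$, and hence $v(Y,Z)=Y/Z$ (resp.\ $Y-Z$) lies in $E$. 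This is not true: $y$ and $z$ satisfy possibly different rank-one equations and need only generate the same field. For $\Gm$, take $z=e/y$ with $e\in E^{\times}$; then $\delta(z)/z=\delta(e)/e-\delta(y)/y\in E$ and $E\anglebrac{z}=E\anglebrac{y}$, but $y/z=y^2/e$ is transcendental over $E$. For $\Ga$, take $z=2y+e$ with $e\in E$; then $\delta(z)\in E$ and $y-z=-y-e\notin E$. The statement that actually holds, and which the paper invokes, is the Kolchin--Ostrowski theorem: there exist nonzero integers $r,s$ with $y^{r}z^{s}\in E$ (resp.\ constants $r,s\in C^{\times}$ with $ry+sz\in E$), and one must take $v(Y,Z)=Y^{r}Z^{s}$ (resp.\ $rY+sZ$) with these a priori unknown exponents; the automorphisms $\phi(T)=2T$ and $\phi(T)=T+1$ then still separate. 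You correctly flag this as the delicate point, but the resolution you propose would fail.

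Second, your choice of $u$ for $G=\mathbb Z/k\mathbb Z$ is wrong. With $u(T)=\delta(T)/T$ and $y_{1},\dots,y_{n}$ differential indeterminates, the extension $F\anglebrac{y_{1},\dots,y_{n}}/F\anglebrac{u(y_{1}),\dots,u(y_{n})}$ of condition \ref{cond:3} is a $\Gm^{n}$-Picard-Vessiot extension, not a $(\mathbb Z/k\mathbb Z)^{n}$-extension, and the variant $\delta(Y^{k})/Y^{k}=k\,\delta(Y)/Y$ generates the same fixed field, so it has the same defect. The paper instead takes $u(T)=T^{k}$, so that $K=F\anglebrac{y_{1}^{k},\dots,y_{n}^{k}}$ and $L/K$ is a Kummer extension with group $(\mathbb Z/k\mathbb Z)^{n}$; condition \ref{cond:2} is then supplied not by $Y/Z$ but by the Kummer-theoretic fact that $yz^{s}\in E$ for some $s$ coprime to $k$, with $v(Y,Z)=YZ^{s}$ and $\phi(T)=\xi T$ for a $k$th root of unity $\xi\neq 1$.
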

\begin{proof}
To prove this corollary, it suffices to check the conditions stated in Proposition \ref{prop:ded-gm-ga} hold for $G = \mathbb G_{m}$, $\mathbb G_{a}$, and $\mathbb Z/k\mathbb{Z}$.

Let $G = \mathbb Z/k\mathbb{Z}$. Since $G^n$ is finite, $G^n$-Picard-Vessiot extensions are the same as $G^n$-Galois extensions as recalled in Subsection \ref{subsection:pv}. Conditions \ref{cond:1} and \ref{cond:3} are verified by Kummer theory, taking $u(T)=T^k$. If $E(y)$ and $E(z)$ give the same $G$-Galois extension over a field $E$ and satisfy $y^k, z^k\in E$, there exists an integer $s$ coprime to $k$ such that $yz^s\in E$ by \cite[Section 14.7, Exercise 7(b), page 636]{dummit2004abstract}. Condition \ref{cond:2} is verified by taking $v(Y,Z) = YZ^s$ and $\phi(T)=\xi T$, where $\xi\neq 1$ is a $k$th root of unity.

Let $G = \mathbb G_{m}$. Conditions \ref{cond:1} and \ref{cond:3} are verified by the structure theorem for $\mathbb G_{m}^{n}$-Picard-Vessiot extensions \cite[Exercise 1.41]{van2012galois}, taking $u(T) = \delta(T)/T$. If $E\anglebrac{ y}$ and $E\anglebrac{ z}$ give the same $G$-Picard-Vessiot extension over a differential field $E$ and satisfy $\delta(y)/y, \delta(z)/z\in E$, there exist nonzero integers $r,s$ such that $y^rz^s\in E$ by the Kolchin-Ostrowski theorem \cite[pages 1155--1156]{kolchin1968algebraic}. Condition \ref{cond:2} is verified by taking $v(Y,Z) = Y^r Z^s$ and $\phi(T) = 2T$.

The results given in \cite[Exercise 1.41]{van2012galois} and \cite[pages 1155--1156]{kolchin1968algebraic} also apply to the case $G= \mathbb G_{a}$, taking $u(T) = \delta(T)$, $v(Y,Z) = rY + sZ$ for $r,s\in C^{\times}$ such that $v(y,z)\in F$, and $\phi(T)=T+1$.
\end{proof}

We now prove Proposition \ref{prop:ded-gm-ga}.

\begin{proof}[Proof of Proposition \ref{prop:ded-gm-ga}]
For the sake of contradiction, suppose the proposition is false. Consider the smallest $n$ for which it fails. The case of $n=0$ trivially holds so $n\ge 1$. The extension $L/K$ descends to some extension $L_{0}/K_{0}$ for some differential subfield $K_{0}$ of $K$ satisfying $\dtrdeg_{F}K_{0}<n$. 

For $i=1,...,n$, the $G$-Picard-Vessiot subextension $K\anglebrac{ y_i}/K$ of $L/K$ descends to some $G$-Picard-Vessiot subextension of $L_0/K_{0}$, which by condition \ref{cond:1}, necessarily has the form $K_{0}\anglebrac{ z_i}/K_0$ for some $z_{i}\in L_{0}$ satisfying $u(z_{i})\in K_{0}$. Thus we can write $L_0$ as $F\anglebrac{ z_1,...,z_n}$ and $K_0$ as $F\anglebrac{ u(z_1),...,u(z_n)}$.

$$
\begin{tikzcd}[row sep=small, column sep = small,/tikz/column 1/.append style={anchor=base east},/tikz/column 4/.append style={anchor=base west}]
F\anglebrac{ y_1,...,y_{n}}=&L\arrow[d,dash]\arrow[dr,dash]& &\\
F\anglebrac{ u(y_1),...,u(y_n)}=&K\arrow[dr,dash] & L_0\arrow[d,dash]&=F\anglebrac{ z_1,...,z_{n}}\\
&  & K_0&=F\anglebrac{ u(z_1),...,u(z_n)}
\end{tikzcd}
$$

Let $L' = F\anglebrac{ y_1,...,y_{n-1}}$, $K'=F\anglebrac{ u(y_1),...,u(y_{n-1})}$, $L_{0}' = F\anglebrac{ z_1,...,z_{n-1}}$, $K_{0}' = F\anglebrac{ u(z_1),...,u(z_{n-1})}$, and $L''=L'\anglebrac{ z_{n}}$. Note that the $G^{n-1}$-Picard-Vessiot extension $L'/K'$ descends to $L_0'/K_0'$. 

$$
\begin{tikzcd}[row sep=small, column sep = small,/tikz/column 4/.append style={anchor=base west},/tikz/column 1/.append style={anchor=base east}]
L'\anglebrac{ z_n}=&L''\arrow[d,dash]\arrow[dr,dash]& &\\
F\anglebrac{ y_1,...,y_{n-1}}=&L'\arrow[d,dash]\arrow[dr,dash]& L_0\arrow[d,dash] & = L'_0\anglebrac{ z_n}\\
F\anglebrac{ u(y_1),...,u(y_{n-1})}=&K'\arrow[dr,dash] & L_0'\arrow[d,dash]&=F\anglebrac{ z_1,...,z_{n-1}} \\
&  & K_0'&=F\anglebrac{ u(z_1),...,u(z_{n-1})}
\end{tikzcd}
$$

Since $n$ is the minimal value for which the proposition fails, we have $\dtrdeg_{F}L_0'=\dtrdeg_{F}K_0' = n-1$. Also we have $L_0 = L_0'\anglebrac{ z_n}$, and so the inequalities $$
n-1\ge \dtrdeg_{F}K_{0}=\dtrdeg_{F}L_{0}\ge\dtrdeg_{F} L_{0}' = n-1$$
now force $z_{n}$ to be differentially algebraic over $L_0'$ and hence over $L'$. Therefore $\dtrdeg_{F}L'' = \dtrdeg_{F}L' = n-1$.

To finish proving the proposition, it suffices to show that $y_n$ is differentially algebraic over $L''$, since this would then imply 
$$n = \dtrdeg_{F} L = \dtrdeg_{F} L''=n-1,$$ resulting in the desired contradiction. Recall that the extension $K\anglebrac{ y_{n}}/K$ descends to $K_{0}\anglebrac{ z_{n}}/K_{0}$.  By condition \ref{cond:2}, there exists a differential polynomial $v(Y,Z)$ over $F$ such that $d\coloneqq v(y_{n},z_{n})$ is a nonzero element in $K$. Since $K = F\anglebrac{ u(y_1),...,u(y_n)}=K'\anglebrac{ u(y_n)}$, we may view $d$ as $f(u(y_n))$ where $f$ is a differential rational function in one variable $T$ over $K'$. Therefore $v(y_n,z_n) = d = f(u(y_{n}))$. Since $d$ is nonzero, $f(T)$ is nonzero. Furthermore by condition \ref{cond:2}, there exists an automorphism $\phi$ that does not fix the differential rational function $g(T)\coloneqq f(u(T)) - v(T,z_n)$. Therefore $y_{n}$ satisfies a nontrivial relation $g(y_n)=0$ over $L''$ and so $y_n$ is differentially algebraic over $L''$.
\end{proof}

\section{Proof of the main theorem}\label{section:main-thm}

In this section, we prove our main result, Theorem \ref{thm:gn=n}, following the approach in \cite[Chapter 8]{jensen2002generic}. As before, we assume the field of constants $C$ of $F$ is algebraically closed, and all differential fields in discussion contain $F$ and have field of constants $C$. We first establish an analogue of the following.

\begin{prop}[Roquette]\label{prop:roquette}
Let $L/F$ be a field extension of finite transcendence degree, and let $M/F$ be a field subextension of $L(\pmb{x})/L$ for some set of algebraic indeterminates $\pmb{x}$ over $L$. Assume that $\trdeg_{F}M \le \trdeg_{F} L$. Then there exists a nonzero polynomial $h(\pmb{x})$ in $L[\pmb{x}]$ such that the following holds: for any tuple $\pmb{c}=(c_x\mid x\in\pmb{x})$ of $L$ satisfying $h(\pmb{c})\neq 0$, the $L$-algebra homomorphism $$L[\pmb{x}]_{\mathfrak{m}}\to L:\quad x\mapsto c_x\quad\text{for }x\in\pmb{x},$$
where $\mathfrak{m}$ is the ideal $(x-c_x\mid x\in\pmb{x})$, restricts to a field homomorphism $M\to L$ over $F$.
\end{prop}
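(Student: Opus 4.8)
The plan is to recast the conclusion valuation-theoretically and then reduce to a finitely generated situation. First I would observe that the specialization $\pmb{x}\mapsto\pmb{c}$ defines the local $L$-algebra $L[\pmb{x}]_{\mathfrak m}$ with residue field $L$, and that the evaluation map restricts to a homomorphism on $M$ exactly when $M\subseteq L[\pmb{x}]_{\mathfrak m}$, i.e. when every element of $M$ is finite at $\pmb{c}$; since $M$ is a field, any such restriction is automatically injective and fixes $F$, hence is a field homomorphism over $F$. So the whole task is to produce $h$ forcing $M\subseteq L[\pmb{x}]_{\mathfrak m}$. Before constructing $h$ I would reduce to the case that $\pmb{x}$ is finite and $M/F$ is finitely generated: only finitely many indeterminates occur in a chosen generating set, and the compositum $LM$ is finitely generated over $L$ and lies in $L(x_1,\dots,x_m)$ for finitely many $x_i$, because $L(x_1,\dots,x_m)$ is relatively algebraically closed in the purely transcendental extension $L(\pmb{x})$. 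The remaining indeterminates can then be specialized arbitrarily and absorbed into $h$, while Proposition \ref{prop:fg} controls the intermediate field $M$.

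Next, using $\charr F = 0$, I would take a separating transcendence basis $a_1,\dots,a_e$ of $M/F$ (so $e=\trdeg_F M\le \trdeg_F L$) together with a primitive element $b$ for the finite separable extension $M/F(a_1,\dots,a_e)$, with minimal polynomial $p(T)$. Writing $a_1,\dots,a_e,b$ as rational functions in $\pmb{x}$ over $L$, I would assemble $h$ from three kinds of factors: the denominators of these rational functions, making each generator finite at $\pmb{c}$; the leading coefficient and discriminant of $p$, so that the specialized polynomial $\bar p(T)$ is defined and separable and retains $\bar b:=b(\pmb{c})$ as a simple root, forcing $F(\bar a_1,\dots,\bar a_e)(\bar b)\cong M$; and a final factor whose nonvanishing keeps $\bar a_1,\dots,\bar a_e$ algebraically independent over $F$. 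Granting this last condition, Abhyankar's inequality closes the argument: the place induced by $\pmb{x}\mapsto\pmb{c}$, restricted to $M$, has residue field containing $F(\bar a_1,\dots,\bar a_e)$, of transcendence degree $e=\trdeg_F M$, which forces the restricted place to be trivial. Hence $M\subseteq L[\pmb{x}]_{\mathfrak m}$ and the restriction is the desired embedding, so only finiteness on the transcendence basis, not on all of $M$, needs to be arranged.

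The main obstacle is precisely the construction of that last factor. A priori, the assertion that $\bar a_1,\dots,\bar a_e$ stay algebraically independent over $F$ is an infinite family of nonvanishing conditions, one per nonzero relation in $F[T_1,\dots,T_e]$, and there is no reason for the union of the corresponding bad sets to sit inside a single hypersurface $V(h)$ unless the hypothesis $\trdeg_F M\le\trdeg_F L$ is used decisively. The plan is to exploit that the $a_i$, being algebraically independent over $F$ in $L(\pmb{x})$, define a dominant rational map to affine $e$-space over $F$, and to bound the locus where $\pmb{c}$ fails to land at an $F$-generic point, measuring independence of the $\bar a_i$ against a separating transcendence basis of $L/F$; it is the inequality $e\le\trdeg_F L$ that should leave enough room inside $L$ for this failure locus to be a genuine hypersurface. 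I expect the delicate heart of the proof to be showing that this bad locus is actually contained in one $V(h)$ rather than merely being a thin or dense union of proper subvarieties — this is the step where the transcendence-degree hypothesis must be leveraged in full, and where I anticipate the real work lies.
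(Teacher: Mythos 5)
Your proposal is not a complete proof: it explicitly defers the one step that carries all the content, namely the construction of the factor of $h$ whose nonvanishing keeps $\bar a_1,\dots,\bar a_e$ algebraically independent over $F$, and you say yourself that you anticipate the real work lies there. For what it is worth, the paper does not supply that step either --- its proof disposes of the trivial case $\trdeg_F L=0$ and cites the proof of Lemma 1 of Roquette's 1964 paper for everything else --- so the burden of the missing argument rests entirely on you, and a plan that stops at ``this is where the transcendence-degree hypothesis must be leveraged'' is a gap, not a proof.

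Moreover, the worry you raise in your last paragraph is fatal to the plan as formulated: the bad locus genuinely need not lie in a single hypersurface. Take $F=\mathbb Q$, $L=\mathbb Q(t)$, $\pmb{x}=\{x\}$, and $M=\mathbb Q(x)\subseteq L(x)$, so $e=\trdeg_F M=1=\trdeg_F L$. The set of $c\in L$ for which $x\mapsto c$ fails to restrict to an $F$-embedding of $M$ into $L$ is exactly $\overline{\mathbb Q}\cap L=\mathbb Q$: for any nonzero $h\in L[x]$ there is $c\in\mathbb Q$ with $h(c)\neq 0$, and then $1/(x-c)\in M$ does not lie in $L[x]_{(x-c)}$. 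So no single polynomial $h$ can certify algebraic independence of the specialized transcendence basis; the failure locus is only a countable union of hypersurfaces. Any argument along your lines must therefore either weaken the conclusion to ``for all $\pmb{c}$ whose coordinates are algebraically independent over the subfield of $L$ generated by $F$ and the finitely many coefficients needed to present $M$ inside $L(\pmb{x})$'' --- which is all that the application in Proposition \ref{prop:fg-embed} actually uses, since the element $c$ there is chosen differentially transcendental over the field generated by all such data --- or follow Roquette's construction of a suitable place directly. The surrounding scaffolding of your proposal (reduction to finitely many indeterminates, separating transcendence basis plus primitive element, and the passage from independence of the $\bar a_i$ to $M\subseteq L[\pmb{x}]_{\mathfrak m}$, where the Abhyankar step should anyway be replaced by the direct lowest-terms argument, since the local ring $L[\pmb{x}]_{\mathfrak m}$ is not a valuation ring and membership in a dominating valuation ring does not imply membership in it) is reasonable, but without the key step the proof does not close.
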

\begin{proof}
If $\trdeg_F L = 0$, both $L/F$ and $M/F$ are algebraic extensions, so $M$ lies in the algebraic closure $L$ of the extension $L(\pmb{x})/L$ and we may take $h(\pmb{x})=1$. The case for $\trdeg_F L>0$ follows from the proof of \cite[Lemma 1]{roquette1964isomorphisms}.
\end{proof}

Here is a differential analogue of Proposition \ref{prop:roquette}.

\begin{prop}\label{prop:fg-embed}
Let $L/F$ be a finitely generated differential field extension, and let $M/F$ be a differential field subextension of $L\anglebrac{t}/F$ for some differentially transcendental element $t$ over $L$. Assume that $\dtrdeg_FM\le \dtrdeg_FL$, and that $F$ has uncountable differential transcendence degree over some differential subfield. Then there exists an element $c\in F$ such that the
differential $L$-algebra homomorphism 
$$L\{ t \}_{\mathfrak{m}}\to L:\quad t\mapsto c,$$
where $\mathfrak{m}$ is the differential ideal $(t^{(i)}-c^{(i)}\mid i\ge 0)$, restricts to a differential field homomorphism of $M\to L$ over $F$.
\end{prop}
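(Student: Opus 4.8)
The plan is to prove a differential version of the specialization argument behind Proposition~\ref{prop:roquette}, reducing to the classical statement one jet-level at a time in the derivatives of $t$, and then using the uncountable differential transcendence degree of $F$ to pin down a single $c\in F$. First I would invoke Proposition~\ref{prop:fg} to assume $M/F$ is finitely generated, say $M=F\anglebrac{m_1,\dots,m_r}$ with $m_i\in L\anglebrac{t}$. I would discard the trivial case in which every $m_i$ is differentially algebraic over $L$ (since then $M\subseteq\overline{L}^{\delta}\cap L\anglebrac t=L$ and any specialization works), so that some $m_i$ is differentially transcendental over $L$. Only finitely many elements of $L$ occur among the coefficients of the $m_i$ and among a finite generating set of the differential ideal of relations of $m_1,\dots,m_r$ over $F$ (Ritt--Raudenbush). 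I would therefore choose a differential subfield $F_3\subseteq F$ finitely generated over the subfield $F_1$ witnessing that $\dtrdeg_{F_1}F$ is uncountable, and a differential subfield $L_3\subseteq L$ finitely generated over $F_3$, arranged so that each $m_i\in L_3\anglebrac t$, the relations of the $m_i$ over $F$ are already defined over $F_3$, $L_3$ contains a differential transcendence basis of $L/F$ (so $\dtrdeg_{F_3}L_3\ge 1$), and $F$ is linearly disjoint from $L_3$ over $F_3$.

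Next I would climb the jet levels. For each $N$ the finitely generated field $M_{3,N}:=F_3(m_i^{(j)}\mid j\le N)$ lies in $L_{3,N'}(t,\dots,t^{(K_N)})$ for a truncation $L_{3,N'}$ of $L_3$ of finite transcendence degree over $F_3$; since $\dtrdeg_{F_3}L_3\ge 1$ forces $\trdeg_{F_3}L_{3,N'}\to\infty$, I may take $N'$ so large that $\trdeg_{F_3}M_{3,N}\le\trdeg_{F_3}L_{3,N'}$ and apply Proposition~\ref{prop:roquette} with base field $F_3$, big field $L_{3,N'}$, and the finitely many algebraic indeterminates $t,\dots,t^{(K_N)}$ over $L_{3,N'}$. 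This produces a nonzero $h_N$ with coefficients in $L_3$ whose nonvanishing at a point guarantees a field embedding $M_{3,N}\hookrightarrow L$ over $F_3$. Setting $\tilde h_N(t):=h_N(t,\dots,t^{(K_N)})\in L_3\{t\}$ gives a nonzero differential polynomial, and there are only countably many such conditions.

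Now I would use the cardinality hypothesis. All coefficients of all $\tilde h_N$ lie in $L_3$, which is finitely generated over $F_1$, so $\dtrdeg_{F_1}L_3<\infty<\dtrdeg_{F_1}F$; hence $F$ is not contained in the differential algebraic closure of $L_3$, and I may pick $c\in F$ differentially transcendental over $L_3$. Then $\tilde h_N(c)\ne 0$ for all $N$, so $t\mapsto c$ induces compatible embeddings on each $M_{3,N}$ and thus a differential embedding $\phi_c\colon M_3:=F_3\anglebrac{m_1,\dots,m_r}\hookrightarrow L$ over $F_3$, which moreover restricts to an isomorphism $L_3\anglebrac t\xrightarrow{\sim}L_3\anglebrac c$. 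To conclude I would upgrade to an embedding over $F$: using that $c$ is differentially transcendental over $L_3$ and that $F$ is linearly disjoint from $L_3$ over $F_3$, a differential transcendence degree computation should give $\dtrdeg_F F\anglebrac{\phi_c(m_1),\dots,\phi_c(m_r)}=\dtrdeg_{F_3}M_3=\dtrdeg_F M$, i.e. $\phi_c(m_1),\dots,\phi_c(m_r)$ is an $F$-generic point of the differential variety cut out by the relations of the $m_i$. This makes $\phi_c$ injective on $M=F\cdot M_3$ and yields the asserted differential homomorphism $M\to L$ over $F$ realized by the map $L\{t\}_{\mathfrak m}\to L$.

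The hard part is this final upgrade, together with the bookkeeping in the first step that makes it possible. Read over the base $F$, Roquette's polynomial would have coefficients spanning all of $F$, rendering the uncountability argument for choosing $c$ useless; passing to the small base $F_3$ confines the coefficients to the finitely generated field $L_3$, at the price of producing only an $F_3$-embedding. The hypothesis $\dtrdeg_F M\le\dtrdeg_F L$ is precisely what lets the $F_3$-embedding survive base change back to $F$: it guarantees enough room inside $L$ for $\phi_c(m_1),\dots,\phi_c(m_r)$ to retain differential transcendence degree $\dtrdeg_F M$ over $F$ rather than collapse into $F$ (as it must when $\dtrdeg_F M>\dtrdeg_F L$, e.g. for $M=L\anglebrac t$). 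Verifying that the linear disjointness is genuinely preserved by $\phi_c$, and hence that no collapse occurs, is the step I expect to require the most care.
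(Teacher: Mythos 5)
Your overall skeleton --- reduce to finitely generated $M$ via Proposition \ref{prop:fg}, filter by jet levels, apply Proposition \ref{prop:roquette} at each level to obtain countably many nonzero polynomial conditions, and use the uncountability hypothesis to find a single $c\in F$ avoiding all of them --- is the paper's. But the detour through the small base $F_3$ is motivated by a misreading of how the uncountability is used, and it is exactly this detour that creates the gap. The paper applies Roquette directly over the base $F$, with big field a truncation $L_j$ of $L$ (of finite transcendence degree over $F$) and algebraic indeterminates $t,t^{(1)},t^{(2)},\dots$; each resulting $h_i$ is one polynomial with finitely many coefficients in $L_j$, so the set $S$ of all coefficients of all the $h_i$ is countable. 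One then picks $c\in F$ differentially transcendental over $F_0\anglebrac{S}$, where $F_0$ is the small subfield witnessing uncountability --- not over $F\anglebrac{S}$. Such $c$ exists because $\dtrdeg_{F_0}F_0\anglebrac{S}$ is countable while $\dtrdeg_{F_0}F$ is not, and such $c$ vanishes on no $h_i$. Your worry that ``Roquette's polynomial would have coefficients spanning all of $F$'' is therefore unfounded: you do not need $c$ generic over $F$, only avoiding countably many explicit differential hypersurfaces, and the level-wise maps are already $F$-algebra homomorphisms $M_i\to L$, automatically compatible because each is a restriction of the single evaluation $L\{t\}_{\mathfrak{m}}\to L$. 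No upgrade step is needed.

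The upgrade step in your version is, as you yourself suspect, where the proof breaks. You produce an $F_3$-embedding $\phi_c\colon M_3\to L$ and want to extend it to $M=F\cdot M_3$ over $F$. Equality of differential transcendence degrees $\dtrdeg_F F\anglebrac{\phi_c(m_1),\dots,\phi_c(m_r)}=\dtrdeg_F M$ does not imply that $m_i\mapsto\phi_c(m_i)$ is well defined and injective on $M$: for that you must show the full prime differential ideal of relations of $(m_1,\dots,m_r)$ over $F$ coincides with that of $(\phi_c(m_1),\dots,\phi_c(m_r))$, i.e., that no new relations with coefficients in $F\setminus F_3$ appear after specialization. Arranging that ``the relations over $F$ are already defined over $F_3$'' (descending a prime differential ideal to a finitely generated subfield so that its extension stays prime) and that the required linear disjointness of $F$ and $\phi_c(M_3)$ over $F_3$ inside $L$ survives the specialization are both unproved, and neither follows from the finitely or countably many conditions you impose on $F_3$, $L_3$, and $c$. (Your ``trivial case'' also quietly assumes that $L$ is differentially algebraically closed in $L\anglebrac{t}$, which itself needs an argument.) All of this is avoided by running Roquette over $F$ itself as above.
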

\begin{proof}

Since $L/F$ is a finitely generated differential field extension, the subextension $M/F$ of $L\anglebrac{ t }/ F$ is also a finitely generated differential field extension by Proposition \ref{prop:fg}. Let $\pmb{y}$ and $\pmb{z}$ be finite sets that satisfy $M = F\anglebrac{\pmb{y}}$ and $L = F\anglebrac{\pmb{z}}$. The subfields
$$
M_i = F(\pmb{y},\pmb{y}^{(1)},...,,\pmb{y}^{(i)}),\qquad L_i = F(\pmb{z},\pmb{z}^{(1)},...,,\pmb{z}^{(i)})
$$
have finite transcendence degree over $F$.

Fix $i\ge 0$. Since $M\subseteq L\anglebrac{t}$, $M_i$ is a subfield of $L_j\anglebrac{ t } = L_j({ t },{ t }^{(1)},{ t }^{(2)},...)$ for some least $j\ge 0$. By Proposition \ref{prop:roquette}, there exists a nonzero polynomial $h_{i}$ over $L_j$ in the algebraic indeterminates $t,t^{(1)}, t ^{(2)},...$ satisfying the following: for any $c\in L_j$ such that $h_{i}(c,c^{(1)},c^{(2)},...)\neq 0$, the $L_j$-algebra homomorphism 
$$L_j\{ t \}_{\mathfrak{n}}\to L_j:\quad t^{(i)}\mapsto c^{(i)}\quad\text{for all } i\ge 0,$$
where $\mathfrak{n}$ is the ideal $(t^{(i)}-c^{(i)}\mid i\ge 0)$ in $L_{j}\{t\}$,
restricts to an $F$-algebra homomorphism $M_{i}\to L_j$. Composing $M_{i}\to L_j$ by the inclusion $L_{j}\subseteq L$, the map $M_{i}\to L$ is then the restriction of $L\{{ t }\}_{\mathfrak{m}}\to L$, where $\mathfrak{m}$ is the differential ideal $(t^{(i)}-c^{(i)}\mid i\ge 0)$ in $L\{t\}$. We will choose $c$ in $F$ instead of the bigger fields $L_j$.

Since $\bigcup M_{i} = M$, any $c\in F$ that satisfies $h_{i}(c,c^{(1)},c^{(2)},...) \neq 0$ for all $i\ge 0$ will induce a differential field homomorphism $M\to L$ over $F$. Let $S$ be the set of coefficients in $L$ of the differential polynomials $h_i$ for all $i\ge 0$. Since $S$ is countable and $\dtrdeg_{F_0}F$ is uncountable for some differential subfield $F_0$ of $F$, there are uncountably many elements $c$ in $F$ that are differentially transcendental over $F_0\anglebrac{S}$. Each such $c\in F$ satisfies the inequalities $h_{i}(c,c^{(1)},c^{(2)},...) \neq 0$ for all $i\ge 0$, and in turn yields a desired map $M\to L$.
\end{proof}

We now prove an analogue of \cite[Proposition 8.2.5]{jensen2002generic} and \cite[Corollary 8.2.6]{jensen2002generic}:

\begin{prop}\label{prop:ded-rational-ext}
Suppose that $F$ has uncountable differential transcendence degree over some differential subfield. Then given a $G$-Picard-Vessiot extension $L/K$ and differentially transcendental elements $t_1,...,t_n$ over $L$, we have $\ded_{F}(L\anglebrac{t_1,...,t_n}/K\anglebrac{t_{1},...,t_{n}}) = \ded_F(L/K)$. Here, $G$ acts on $L\anglebrac{ t_1,...,t_n}$ by the action of $G$ on $L$ and the trivial action on the $t_i$.
\end{prop}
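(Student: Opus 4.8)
The plan is to reduce to the single--transcendental case $n=1$ and then prove the two inequalities separately, the hard one resting on the differential Roquette lemma, Proposition~\ref{prop:fg-embed}. First observe that $L\anglebrac{t}/K\anglebrac{t}$ is again a $G$-Picard-Vessiot extension: adjoining a differentially transcendental $t$ creates no new constants, and a fundamental solution matrix for $L/K$ stays one over $K\anglebrac{t}$, so restriction identifies its differential Galois group with $G$. Hence $L\anglebrac{t_1,\dots,t_{n-1}}/K\anglebrac{t_1,\dots,t_{n-1}}$ is a $G$-Picard-Vessiot extension over which $t_n$ is differentially transcendental, and the $n=1$ statement applied to it removes one transcendental; iterating reduces us to $n=1$.

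For the inequality $\ded_F(L\anglebrac{t}/K\anglebrac{t})\le\ded_F(L/K)$, suppose $L/K$ descends to $L_0/K_0$. By Lemma~\ref{lem:descent-tfae} I may take $L_0\subseteq L$ to be $G$-equivariant; since $G$ fixes $t$, the inclusion $L\subseteq L\anglebrac{t}$ is $G$-equivariant as well, so $L_0\subseteq L\anglebrac{t}$ is $G$-equivariant and Lemma~\ref{lem:descent-tfae} gives that $L\anglebrac{t}/K\anglebrac{t}$ descends to $L_0/K_0$. Taking the infimum over fields of definition of $L/K$ yields the inequality.

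The reverse inequality is the crux. Pick a field of definition $\tilde L_0/\tilde K_0$ of $L\anglebrac{t}/K\anglebrac{t}$ with $\dtrdeg_F\tilde K_0=d:=\ded_F(L\anglebrac{t}/K\anglebrac{t})$; as a Picard-Vessiot extension is cut out by a matrix equation with finitely many coefficients, I may take $\tilde K_0/F$, and hence $\tilde L_0/F$, finitely generated, and by Lemma~\ref{lem:descent-tfae} take $\tilde L_0\subseteq L\anglebrac{t}$ to be $G$-equivariant. Since the Picard-Vessiot extension $\tilde L_0/\tilde K_0$ has finite ordinary transcendence degree, $\dtrdeg_F\tilde L_0=\dtrdeg_F\tilde K_0=d$, and the easy direction together with the bound $\ded_F(L/K)\le\dtrdeg_F K=\dtrdeg_F L$ gives $d\le\dtrdeg_F L$. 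Because $L/F$ may fail to be finitely generated, I would first choose a finitely generated differential subfield $L_1\subseteq L$ over $F$ that contains the finitely many $L$-coefficients of a generating set of $\tilde L_0$ and satisfies $\dtrdeg_F L_1\ge d$ (possible since $\dtrdeg_F L\ge d$), so that $\tilde L_0\subseteq L_1\anglebrac{t}$ and $\dtrdeg_F\tilde L_0\le\dtrdeg_F L_1$. Proposition~\ref{prop:fg-embed}, applied with $L_1$ in the role of its ``$L$'' and $M=\tilde L_0$, then furnishes an element $c\in F$ and a differential field homomorphism $\psi\colon\tilde L_0\to L_1\subseteq L$ over $F$ realized by the specialization $t\mapsto c$.

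Set $\bar L_0=\psi(\tilde L_0)$ and $\bar K_0=\psi(\tilde K_0)$. As $c\in F\subseteq K$, the specialization carries $\tilde K_0\subseteq K\anglebrac{t}$ into $K$, so $\bar K_0\subseteq K$ and $\bar L_0\subseteq L$. The decisive point is that $\psi$ is $G$-equivariant for the $G$-action on $L$: each $\sigma\in G$ fixes $t$ and fixes $c\in F$, hence commutes with the specialization, so $\bar L_0$ is $G$-stable and the action transported through $\psi$ is the restriction of that on $L$. Being injective, $\psi$ is an isomorphism of $G$-Picard-Vessiot extensions $\tilde L_0/\tilde K_0\to\bar L_0/\bar K_0$, so $\dtrdeg_F\bar K_0=d$ and $\bar L_0\subseteq L$ is $G$-equivariant; Lemma~\ref{lem:descent-tfae} then certifies that $L/K$ descends to $\bar L_0/\bar K_0$, giving $\ded_F(L/K)\le d$ and hence equality. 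I expect the main obstacle to be precisely this descent step: extracting from a field of definition of $L\anglebrac{t}/K\anglebrac{t}$ a specialization of $t$ landing inside the original $L$ that respects the $G$-action, so that Lemma~\ref{lem:descent-tfae} applies. This is where the $G$-fixity of both $t$ and the value $c$, the degree inequality $\dtrdeg_F\tilde L_0\le\dtrdeg_F L_1$, and the passage to a finitely generated $L_1$ all enter.
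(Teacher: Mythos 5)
Your proof is correct and follows essentially the same route as the paper's: reduce to $n=1$, obtain the easy inequality from the $G$-equivariant chain $L_0\subseteq L\subseteq L\anglebrac{t}$, and for the reverse inequality invoke Proposition \ref{prop:fg-embed} to specialize $t\mapsto c\in F$, noting that the resulting embedding of the field of definition into $L$ is $G$-equivariant because $c$ is $G$-fixed, so Lemma \ref{lem:descent-tfae} yields descent. The only divergence is your preliminary reduction to a finitely generated $L_1$ (a hypothesis of Proposition \ref{prop:fg-embed} that the paper's proof does not address); just be aware that your justification that $\tilde K_0/F$ may be taken finitely generated is not immediate, since replacing $\tilde K_0$ by the differential field generated by the coefficients of a defining matrix equation can enlarge the differential Galois group, so that subextension need not be a $G$-Picard-Vessiot extension.
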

\begin{proof}
It suffices to prove the case $n=1$ since the general case is a repeated application of this case. Let $t=t_1$. Clearly $L\anglebrac{t}/K\anglebrac{t}$ descends to $L/K$, so $\ded_{F}(L\anglebrac{t}/K\anglebrac{t}) \le \ded_F(L/K)$. Now suppose that $L\anglebrac{t}/K\anglebrac{t}$ descends to a $G$-Picard-Vessiot extension $L_0/K_0$ with $\dtrdeg_{F}K_0\le \dtrdeg_{F}K$. We may assume that the inclusion $L_0 \subseteq L\anglebrac{ t}$ is $G$-equivariant. By Proposition \ref{prop:fg-embed}, there exists $c\in F$ such that the differential homomorphism
$$\varphi:L\{ t \}_{\mathfrak{m}}\to L:\quad t\mapsto c,$$
where $\mathfrak{m}$ is the differential ideal $(t^{(i)}-c^{(i)}\mid i\ge 0)$, restricts to a differential field homomorphism of $L_0$ into $L$ over $F$. Since $c$ is in $F$, the map $\varphi$ is $G$-equivariant. By Lemma \ref{lem:descent-tfae}, $L/K$ descends to $L_0/K_0$.
\end{proof}

\begin{prop}\label{prop:ded-bounds}
Let $L/K$ be the $\GL_n$-Picard-Vessiot extension for the general $n\times n$ matrix differential equation $\delta(Y)=AY$. Let $(\mathbb{Z}/k\mathbb{Z})^n$ for $k\ge 2$ embed diagonally into $\GL_n$ as the group of diagonal matrices whose orders divide $k$, and let $\mathbb{G}_a^{n-1}$ embed into $\GL_n$ via the map 
$\pmb{a}\mapsto \begin{pmatrix}
1 & \pmb{a}\\
0 & I_{n-1}\\
\end{pmatrix}$, where $\pmb{a} = (a_1,...,a_{n-1})$.
Let $H$ be a closed subgroup of $\GL_n$.
\begin{enumerate}[label={(\arabic*)}]
    \item\label{emb:1}  If $H$ contains the above embedding of $(\mathbb{Z}/k\mathbb{Z})^n$, then $\ded_F(L/L^H) = n$.
    \item\label{emb:2}  If $H$ contains the above embedding of $\mathbb{G}_a^{n-1}$, then $n\ge \ded_F(L/L^H) \ge n-1$.
\end{enumerate}
\end{prop}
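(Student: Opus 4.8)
The plan is to prove the upper bounds uniformly from the companion-form descent, and to prove the lower bounds by identifying $L/L^H$, after restricting to the indicated subgroup, with a purely differentially transcendental extension of one of the generic extensions of Corollary~\ref{cor:ded-gm-ga}. For the upper bounds, recall from the discussion preceding Definition~\ref{def:ded} that $\delta(Y)=AY$ is gauge equivalent over $K$ to a companion-form equation with coefficients $b_0,\dots,b_{n-1}$, so $L/K$ descends to the resulting $\GL_n$-Picard-Vessiot extension $L_0/K_0$ with $K_0=F\langle b_0,\dots,b_{n-1}\rangle$. Hence $\ded_F(L/K)\le\dtrdeg_F K_0\le n$, and since $H$ is a closed subgroup of $\GL_n$, Corollary~\ref{cor:descent-subPV}\ref{descent-ded-1} gives $\ded_F(L/L^H)\le\ded_F(L/K)\le n$, proving the upper bounds in both \ref{emb:1} and \ref{emb:2}.

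For the lower bounds, let $H'$ denote the displayed copy of $(\mathbb{Z}/k\mathbb{Z})^n$ in \ref{emb:1} and of $\mathbb{G}_a^{n-1}$ in \ref{emb:2}; each is a closed subgroup of $H$. Applying Corollary~\ref{cor:descent-subPV}\ref{descent-ded-1} to the $H$-Picard-Vessiot extension $L/L^H$ and the subgroup $H'\le H$ gives $\ded_F(L/L^H)\ge\ded_F(L/L^{H'})$, so it suffices to bound $\ded_F(L/L^{H'})$ below by $n$ in case \ref{emb:1} and by $n-1$ in case \ref{emb:2}. By Example~\ref{ex:general-de} we may write $L=F\langle y_{ij}\rangle$ with $\GL_n$ acting on the fundamental solution matrix $(y_{ij})$ by right multiplication. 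In case \ref{emb:1}, $\diag(\zeta_1,\dots,\zeta_n)\in H'$ sends $y_{ij}\mapsto\zeta_j y_{ij}$; setting $w_j=y_{1j}$ and $r_{ij}=y_{ij}/y_{1j}$ for $i\ge 2$, we find $w_j\mapsto\zeta_j w_j$ while the $r_{ij}$ are $H'$-fixed, so on $E\coloneqq F\langle w_1,\dots,w_n\rangle$ the group $H'$ acts by the Kummer action of Corollary~\ref{cor:ded-gm-ga} (with $u(T)=T^k$ and $E^{H'}=F\langle w_1^k,\dots,w_n^k\rangle$). In case \ref{emb:2}, the displayed embedding fixes the first column and sends $y_{ij}\mapsto y_{ij}+a_{j-1}y_{i1}$ for $j\ge 2$; setting $w_j=y_{1j}/y_{11}$ for $j\ge 2$ and $t_{ij}=y_{ij}/y_{i1}-y_{1j}/y_{11}$ for $i,j\ge 2$, we find $w_j\mapsto w_j+a_{j-1}$ while the $y_{i1}$ and $t_{ij}$ are $H'$-fixed, so on $E\coloneqq F\langle w_2,\dots,w_n\rangle$ the group $H'$ acts by the additive action of Corollary~\ref{cor:ded-gm-ga} (with $u(T)=\delta(T)$ and $E^{H'}=F\langle\delta(w_2),\dots,\delta(w_n)\rangle$).

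In each case a differential transcendence degree count shows the listed $H'$-invariant coordinates (the $r_{ij}$ in \ref{emb:1}; the $y_{i1}$ and $t_{ij}$ in \ref{emb:2}) are differentially transcendental over $E$ and form a differential transcendence basis of $L/E$. Writing $\pmb{p}$ for this collection, on which $H'$ acts trivially, we get $L=E\langle\pmb{p}\rangle$ and, once the two $H'$-Picard-Vessiot structures are checked to coincide, $L^{H'}=E^{H'}\langle\pmb{p}\rangle$; thus $L/L^{H'}$ is obtained from $E/E^{H'}$ by adjoining the differentially transcendental $\pmb{p}$ with trivial action. After enlarging $F$ to a differential overfield of uncountable differential transcendence degree over a subfield --- which only lowers $\ded_F$, so that a lower bound established over the overfield transfers back to $F$ --- Proposition~\ref{prop:ded-rational-ext} gives $\ded_F(L/L^{H'})=\ded_F(E/E^{H'})$, and Corollary~\ref{cor:ded-gm-ga} gives $\ded_F(E/E^{H'})\ge n$ in case \ref{emb:1} and $\ge n-1$ in case \ref{emb:2}. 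Together with the upper bounds this yields $\ded_F(L/L^H)=n$ in \ref{emb:1} and $n\ge\ded_F(L/L^H)\ge n-1$ in \ref{emb:2}.

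I expect the main obstacle to be the identification step: carrying out the coordinate change so that the $H'$-action on the subfield $E$ agrees verbatim with the generic Kummer or additive action of Corollary~\ref{cor:ded-gm-ga}, and verifying the fixed-field identity $L^{H'}=E^{H'}\langle\pmb{p}\rangle$, i.e. that the $H'$-Picard-Vessiot structure on $E\langle\pmb{p}\rangle/E^{H'}\langle\pmb{p}\rangle$ produced by Proposition~\ref{prop:ded-rational-ext} is the one inherited from $L/K$. The reduction to uncountable differential transcendence degree needed to invoke Proposition~\ref{prop:ded-rational-ext} is a secondary technical point, requiring compatibility of these constructions with differential base change.
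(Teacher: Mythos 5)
Your proposal is correct and follows the paper's proof in all essentials: the upper bound via the companion-form descent and Corollary~\ref{cor:descent-subPV}\ref{descent-ded-1}, the reduction to the abelian subgroup $H'$, the splitting of $L$ as a purely differentially transcendental extension (by $H'$-invariants) of a copy of the generic extension from Corollary~\ref{cor:ded-gm-ga}, and the transfer of the lower bound via Proposition~\ref{prop:ded-rational-ext} after enlarging $F$. The only divergence is your explicit choice of semi-invariant coordinates --- in case \ref{emb:2} you take $w_j=y_{1j}/y_{11}$, on which $\mathbb{G}_a^{n-1}$ acts by the \emph{independent} translations $w_j\mapsto w_j+a_{j-1}$ as the identification with Corollary~\ref{cor:ded-gm-ga} requires, whereas the paper's set $T=\{y_{i2}/y_{i1}\}$ consists of elements that all translate by the same $a_1$; your indexing is the one that makes the argument go through for $n\ge 3$.
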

\begin{proof}
Let $H'\le H\le \GL_{n}$. The upper bound $n\ge \ded_{F}(L/L^{H})$ in \ref{emb:1} and \ref{emb:2} follows from Proposition \ref{prop:gamma=GL} and Corollary \ref{cor:descent-subPV}\ref{descent-ded-1} since we have the inequalities
$$n\ge \gamma(n) \ge \ded_F(L/L^{\GL_n})\ge \ded_F(L/L^H)\ge \ded_F(L/L^{H'}).$$ 

Let $\pmb{t}$ be an uncountable set of differential indeterminates over $L$. Since
$\ded_{F}(L/K)\ge \ded_{F\anglebrac{ \pmb{t}}}(L\anglebrac{ \pmb{t}}/K\anglebrac{ \pmb{t}})$, we can replace $L$, $K$, and $F$ by $L\anglebrac{ \pmb{t}}$, $K\anglebrac{ \pmb{t}}$, and $F\anglebrac{ \pmb{t}}$ to further assume that $F$ has uncountable differential transcendence degree over some differential subfield.

\ref{emb:1}. We now prove the inequality $\ded_F(L/L^{H'}) \ge n$ for $H' = (\mathbb{Z}/k\mathbb{Z})^n$. As a differential field, $L$ is generated by the $n^2$ entries of a fundamental solution matrix $(y_{ij})$ of $\delta(Y)=AY$  over $K$. The set $S$ consisting of elements 
\begin{enumerate}[label={(\alph*)}]
    \item\label{part-1a} $y_{1j}^{k}$ for $j = 1,...,n$, and
    \item\label{part-1b} $y_{1j}^{k-1}y_{ij}$ for $i=2,...,n$ and $j=1,...,n$
\end{enumerate}
is $H'$-invariant. Thus $F\anglebrac{ S}\subseteq L^{H'}$ and $[L:F\anglebrac{S}]\ge k^n$. Since $L$ is obtained by adjoining the set
$$T \coloneqq \left\lbrace y_{11},y_{12},...,y_{1n}\right\rbrace$$
to $F\anglebrac{ S}$ and since $y_{1j}^k\in F\anglebrac{S}$, we have $[L:F\anglebrac{S}]\le k^n$. Thus $L^{H'} = F\anglebrac{ S}$ by degree considerations.

From this explicit description, $L/L^{H'}$ descends to $L_0/L_0^{H'}$ where $L_0 = F\anglebrac{ T}$. The extension $L_0/L_0^{H'}$ is isomorphic to the one constructed in Corollary \ref{cor:ded-gm-ga} so $\ded_{F}(L_0/L_0^{H'}) = n$. Moreover $L$ is obtained by adjoining the differentially algebraically independent elements of the form \ref{part-1b}
to $L_0$. By Proposition \ref{prop:ded-rational-ext},
\begin{equation*}
    \ded_{F}(L/L^{H'}) \ge \ded_{F}(L_0/L_0^{H'}) = n.
\end{equation*}

\ref{emb:2}. We similarly prove $\ded_{F}(L/L^{H'})\ge n-1$ for $H' = \mathbb G_{a}^{n-1}$. The set $S$ consisting of elements
\begin{enumerate}[label={(\alph*\textquotesingle)}]
    \item\label{part-1} $ \delta(y_{i2}/y_{i1})$ for $i=2,...,n$,
    \item\label{part-2} $y_{11}, y_{21},...,y_{n1}$,
    \item\label{part-3}$y_{i2} - y_{ik}$ for $i=1,...,n$ and $k= 3,...,n$, and
    \item\label{part-4} $y_{11}y_{22}-y_{12}y_{21}$,
\end{enumerate}
is $H'$-invariant. Thus $F\anglebrac{S}\subseteq L^{H'}$ and $\trdeg_{F\anglebrac{S}} L\ge n-1$. Since $L$ is obtained by adjoining the set
$$T \coloneqq \left\lbrace y_{22}/y_{21},y_{32}/y_{31},...,y_{n2}/y_{n1}\right\rbrace$$
to $F\anglebrac{ S}$ and since $\delta(T)\subseteq F\anglebrac{S}$, the extension $L/F\anglebrac{S}$ is a $\mathbb G_{a}^{m}$-Picard-Vessiot extension for some $m\le n-1$. Thus $\trdeg_{F\anglebrac{S}}L= n-1$, so $L^{H'}/F\anglebrac{S}$ is a finite extension. Since $\mathbb G_{a}^{m}$ is a connected algebraic group, we have $L^{H'} = F\anglebrac{S}$.

Next we define $L_0 = F\anglebrac{T}$ and note that $L/L^{H'}$ descends to the extension $L_0/L_{0}^{H'}$ which is isomorphic to the one constructed in Corollary \ref{cor:ded-gm-ga} so $\ded_{F}(L_0/L_0^{H'}) = n-1$. Moreover $L$ is obtained by adjoining the differentially algebraically independent elements of the form \ref{part-2} -- \ref{part-4} to $L_0$. By Proposition \ref{prop:ded-rational-ext},
\begin{equation*}
    \ded_{F}(L/L^{H'}) \ge \ded_{F}(L_0/L_0^{H'}) = n-1.\qedhere
\end{equation*}
\end{proof}

Proposition \ref{prop:ded-bounds} applied to $H=\GL_n$, combined with Proposition \ref{prop:gamma=GL}, gives

\begin{thm}\label{thm:gn=n}
For all $n\ge 1$, $\gamma(n)=n$.
\end{thm}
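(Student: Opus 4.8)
The plan is to assemble the theorem from the machinery already developed, since from the vantage point of this final statement all of the substantive work has been pushed into the preceding results. The upper bound $\gamma(n)\le n$ is already recorded in the introduction via the cyclic vector theorem, which exhibits the general matrix equation as gauge equivalent to the companion-matrix equation \eqref{eq:gauge} in the $n$ coefficients $b_0,\dots,b_{n-1}$. So the entire content of the theorem reduces to establishing the matching lower bound $\gamma(n)\ge n$.

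For the lower bound, I would first invoke Proposition \ref{prop:gamma=GL} to replace the combinatorial quantity $\gamma(n)$ by a differential essential dimension, namely $\gamma(n)\ge \ded_F(L/K)$, where $L/K$ is the $\GL_n$-Picard-Vessiot extension attached to the general matrix differential equation. The payoff of this reduction is that $\ded_F(L/K)$ is a quantity I can now pin down. By Example \ref{ex:general-de} we have $\Dgal(L/K)=\GL_n(C)$, so the Galois correspondence identifies $K$ with the fixed field $L^{\GL_n}$; hence $\ded_F(L/K)=\ded_F(L/L^{\GL_n})$.

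Next I would apply Proposition \ref{prop:ded-bounds}\ref{emb:1} with $H=\GL_n$. The only hypothesis to verify is that $\GL_n$ contains the diagonal copy of $(\mathbb Z/k\mathbb Z)^n$, which is immediate: taking $k=2$, the diagonal matrices with entries $\pm 1$ supply the required embedding. Proposition \ref{prop:ded-bounds}\ref{emb:1} then yields $\ded_F(L/L^{\GL_n})=n$. Chaining the resulting (in)equalities gives
$$n\ge \gamma(n)\ge \ded_F(L/K)=\ded_F(L/L^{\GL_n})=n,$$
which forces $\gamma(n)=n$ and completes the argument.

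The hard part is not here but upstream. There is no genuine obstacle in the final statement itself: it is a clean corollary, and the only care needed is correctly identifying $K$ with $L^{\GL_n}$ and checking the finite-subgroup hypothesis, both routine. The real difficulty lives in Proposition \ref{prop:ded-bounds}, where one must engineer an explicit $H'$-invariant generating set so that $L^{H'}$ descends to a model of the extension whose essential dimension is computed in Corollary \ref{cor:ded-gm-ga}, and in the Roquette-style specialization of Proposition \ref{prop:fg-embed} that underlies the stability of $\ded_F$ under adjoining differential transcendentals (Proposition \ref{prop:ded-rational-ext}). Once those are in hand, the theorem follows by the bookkeeping above.
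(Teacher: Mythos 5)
Your proof is correct and follows exactly the paper's route: the paper derives Theorem \ref{thm:gn=n} by combining Proposition \ref{prop:gamma=GL} with Proposition \ref{prop:ded-bounds} applied to $H=\GL_n$, together with the upper bound $\gamma(n)\le n$ from the cyclic vector theorem. Your additional remarks about identifying $K$ with $L^{\GL_n}$ and checking the $(\mathbb Z/k\mathbb Z)^n$ hypothesis are just the implicit details the paper leaves to the reader.
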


\section{Generic Picard-Vessiot extensions}\label{section:generic-pv}

In this section, we prove lower bounds on the number of parameters in a generic Picard-Vessiot extension, following the method of \cite[Section 7]{buhler1997essential}. We again assume the field of constants $C$ of $F$ is algebraically closed, and all differential fields in discussion contain $F$ and have field of constants $C$.

\begin{defi}
Let $K = F\anglebrac{ y_1,...,y_m}$ with $y_1,...,y_m$ differential indeterminates over $F$. A $G$-Picard-Vessiot extension $L/K$ is \emph{generic} if for every $G$-Picard-Vessiot extension $L'/K'$, the following two conditions hold:
\begin{enumerate}
    \item there exists some matrix differential equation $\delta(Y)=A(y_1,...,y_m)Y$ over $K$ whose Picard-Vessiot extension is $L/K$;
    \item there exist elements $a_{1},...,a_m\in K'$ such that $A(a_1,...,a_m)$ is defined and $L'/K'$ is the Picard-Vessiot extension for the differential equation $\delta(Y)=A(a_1,...,a_m)Y$.
\end{enumerate}
\end{defi}

The above definition of \emph{generic} is slightly more inclusive than that given in \cite{juan2007picard} and \cite[Section 6]{juan2016generic}. In particular, the following result applies to the generic extensions constructed there.

\begin{prop}\label{prop:generic-PV}
Let $L/K$ and $L'/K'$ be $G$-Picard-Vessiot extensions. If $L/K$ is generic, $\dtrdeg_{F}K\ge \ded_{F}(L'/K')$.
\end{prop}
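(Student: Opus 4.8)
The plan is to exploit genericity to specialize the defining matrix equation of $L/K$ into the base field $K'$, thereby exhibiting a differential field of definition of $L'/K'$ whose differential transcendence degree is at most $\dtrdeg_F K$. Write $K = F\anglebrac{y_1,\dots,y_m}$, so that $\dtrdeg_F K = m$. By genericity, $L/K$ is the Picard-Vessiot extension of some equation $\delta(Y) = A(y_1,\dots,y_m)Y$, and there exist $a_1,\dots,a_m \in K'$ for which $A(a_1,\dots,a_m)$ is defined and $L'/K'$ is the Picard-Vessiot extension of $\delta(Y) = A(a_1,\dots,a_m)Y$. Setting $K'_0 = F\anglebrac{a_1,\dots,a_m} \subseteq K'$, a differential field generated by $m$ elements, I would record the bound $\dtrdeg_F K'_0 \le m = \dtrdeg_F K$.

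Next I would build a candidate descent over $K'_0$. Fixing a fundamental solution matrix $Z'$ with $L' = K'\anglebrac{Z'}$, I set $L'_0 = K'_0\anglebrac{Z'} \subseteq L'$. Since $C_{L'_0}$ is squeezed between $C$ and $C_{L'} = C$, the extension $L'_0/K'_0$ is a Picard-Vessiot extension of the same equation, and the restriction map $\Dgal(L'/K') \to \Dgal(L'_0/K'_0)$ is a homomorphism of algebraic groups (as discussed before Lemma \ref{lem:descent-tfae}); it is injective because $L'_0 K' = L'$, so that its kernel $\Dgal(L'/L'_0K')$ is trivial. Let $G'$ denote its image, a closed subgroup isomorphic to $G$.

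The main obstacle is that $L'_0/K'_0$ need not itself be a $G$-Picard-Vessiot extension: passing to the smaller base field $K'_0$ can enlarge the differential Galois group, so $\Dgal(L'_0/K'_0)$ may strictly contain $G'$, and Definition \ref{def:ded} does not permit a descent through an extension with the wrong group. I would resolve this by cutting down via the Galois correspondence: set $K''_0 = (L'_0)^{G'}$, so that $L'_0/K''_0$ is $G'$-Picard-Vessiot and hence $G$-Picard-Vessiot through $\res$. The key verification is the chain $K'_0 \subseteq K''_0 \subseteq K'$; the first inclusion holds because $G'$ is a subgroup of $\Dgal(L'_0/K'_0)$, and the second because any element of $(L'_0)^{G'}$ is fixed by all of $\Dgal(L'/K')$ while $(L')^{\Dgal(L'/K')} = K'$.

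To finish, I would check that $L'/K'$ descends to $L'_0/K''_0$: both are $G$-Picard-Vessiot, $L'_0 \subseteq L'$, $K''_0 \subseteq K'$, and $L'_0 K' = L'$. Finally, since $K'_0 \subseteq K''_0 \subseteq L'_0$ and $\trdeg_{K'_0} L'_0 = \dim \Dgal(L'_0/K'_0)$ is finite, all three fields share the same differential transcendence degree over $F$, giving $\dtrdeg_F K''_0 = \dtrdeg_F K'_0 \le m$. Therefore $\ded_F(L'/K') \le \dtrdeg_F K''_0 \le \dtrdeg_F K$, as desired.
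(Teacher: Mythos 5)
Your proof is correct and follows the same route as the paper: use genericity to specialize the defining equation into $K'$ and observe that the field generated by $a_1,\dots,a_m$ over $F$ yields a differential field of definition of $L'/K'$ of differential transcendence degree at most $m=\dtrdeg_F K$. The paper compresses the middle of your argument into the single assertion that ``$L'/K'$ is defined over $K''=F\anglebrac{a_1,\dots,a_m}$''; your explicit passage to $K''_0=(L'_0)^{G'}$, which repairs the possibly too large differential Galois group over $F\anglebrac{a_1,\dots,a_m}$ and verifies $K'_0\subseteq K''_0\subseteq K'$ with no change in differential transcendence degree, is a legitimate and careful filling-in of exactly that step.
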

\begin{proof}
Since $L/K$ is generic, there exists an equation $\delta(Y)=A(y_1,...,y_m)Y$ whose Picard-Vessiot extension is $L/K$, and there exist $a_1,...,a_m\in K'$ such that $\delta(Y)=A(a_1,...,a_m)Y$ has Picard-Vessiot extension $L'/K'$. Since $\delta(Y)=A(a_1,...,a_m)Y$ has coefficients in $K''=F\anglebrac{ a_1,...,a_m}$, $L'/K'$ is defined over $K''$. Thus 
\begin{equation*}
    \dtrdeg_{F}K\ge \dtrdeg_{F}K''\ge \ded_{F}(L'/K').\qedhere
\end{equation*}
\end{proof}

Proposition \ref{prop:ded-bounds} and Proposition \ref{prop:generic-PV} together give

\begin{cor}\label{cor:generic-PV}
Let $L/K$ be a generic $G$-Picard-Vessiot extension. Then
\begin{enumerate}
    \item $\dtrdeg_F K\ge n$ if $G$ is either $\GL_n$, $\mathbb{G}^n_m$, $\mathbb{G}^n_a$, $(\mathbb{Z}/k\mathbb{Z})^n$ (for $k\ge 2$), or $\OO_n$, and
    \item $\dtrdeg_F K\ge n-1$ if $G$ is either $\SL_n$, $\mathbb{U}_n$, or $\SO_n$.
\end{enumerate}
\end{cor}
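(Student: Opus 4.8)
The plan is to combine the two main structural inputs already in hand: Proposition~\ref{prop:generic-PV}, which says $\dtrdeg_F K \ge \ded_F(L'/K')$ for any $G$-Picard-Vessiot extension $L'/K'$ whenever $L/K$ is generic, and Proposition~\ref{prop:ded-bounds}, which computes lower bounds on $\ded_F(L/L^H)$ for the universal extension $L/K$ attached to the general matrix equation, whenever $H \le \GL_n$ contains one of the two distinguished embeddings. The strategy is therefore to produce, for each group $G$ in the list, a single $G$-Picard-Vessiot extension whose differential essential dimension we can bound from below, and then to feed that bound through Proposition~\ref{prop:generic-PV}.

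Concretely, I would let $L/K$ denote the $\GL_n$-Picard-Vessiot extension for the general $n\times n$ matrix equation (so I can invoke Proposition~\ref{prop:ded-bounds} verbatim), and for each $G$ realize $G$ as a closed subgroup $H$ of $\GL_n$ via a standard faithful embedding, so that $L/L^H$ is an $H$-Picard-Vessiot extension, i.e.\ a $G$-Picard-Vessiot extension after identifying $G \cong H$. First I would handle the groups needing the bound $n$: for $G = (\mathbb Z/k\mathbb Z)^n$ and for $G = \GL_n$ itself the containment of the diagonal $(\mathbb Z/k\mathbb Z)^n$ inside $H$ is immediate, and Proposition~\ref{prop:ded-bounds}\ref{emb:1} yields $\ded_F(L/L^H) = n$. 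For $G = \mathbb G_m^n$ I would embed it as the diagonal torus, which visibly contains the diagonal copy of $(\mathbb Z/k\mathbb Z)^n$; for $G = \OO_n$ and $G = \mathbb G_a^n$ I would likewise choose embeddings into $\GL_n$ (or a slightly larger $\GL_N$) whose image contains a diagonal $(\mathbb Z/k\mathbb Z)^n$, so that part~\ref{emb:1} again applies and gives $\ded \ge n$. For the second group of claims I would apply Proposition~\ref{prop:ded-bounds}\ref{emb:2}: embedding $\SL_n$, $\mathbb U_n$ (the upper unitriangular group), or $\SO_n$ into $\GL_n$ so that the image contains the distinguished $\mathbb G_a^{n-1}$ of upper-right unipotents gives $\ded_F(L/L^H) \ge n-1$.

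The final step is bookkeeping: for each $G$ the subextension $L/L^H$ is a genuine $G$-Picard-Vessiot extension, so taking $L'/K' = L/L^H$ in Proposition~\ref{prop:generic-PV} gives $\dtrdeg_F K \ge \ded_F(L/L^H)$, which is $\ge n$ in case~(1) and $\ge n-1$ in case~(2). Summing up over the eight groups then produces exactly the stated inequalities.

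The main obstacle I expect is verifying, for each group on the list, that the chosen embedding into some $\GL_N$ really does contain the required distinguished subgroup in the precise diagonal or upper-unitriangular position demanded by Proposition~\ref{prop:ded-bounds}. For the diagonal torus and the matrix-size matching this is routine, but for $\OO_n$ and $\SO_n$ one must check that a full diagonal copy of $(\mathbb Z/k\mathbb Z)^n$ (resp.\ a $\mathbb G_a^{n-1}$) actually sits inside the orthogonal group in the right coordinates --- this may force choosing a convenient quadratic form, possibly changing the ambient $n$ or passing to $\GL_N$ for suitable $N$, and it is the one place where the clean statement could hide a small combinatorial or rank computation rather than following formally from the earlier propositions.
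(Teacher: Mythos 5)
Your overall strategy is exactly the paper's: the printed proof of this corollary is literally the one line ``Proposition \ref{prop:ded-bounds} and Proposition \ref{prop:generic-PV} together give,'' so filling in the embeddings group by group is the right thing to do. However, one of your steps fails outright. For $G=\mathbb G_a^n$ you propose to choose an embedding into some $\GL_N$ ``whose image contains a diagonal $(\mathbb Z/k\mathbb Z)^n$'' so that Proposition \ref{prop:ded-bounds}\ref{emb:1} applies. This is impossible: in characteristic zero $\mathbb G_a^n$ is unipotent, hence torsion-free, so its image under any embedding contains no nontrivial element of finite order at all. The correct route for $\mathbb G_a^n$ is part \ref{emb:2}, not part \ref{emb:1}: embed $\mathbb G_a^n$ into $\GL_{n+1}$ as the distinguished block $\pmb a\mapsto\begin{pmatrix}1&\pmb a\\0&I_n\end{pmatrix}$, apply Proposition \ref{prop:ded-bounds}\ref{emb:2} with ambient size $n+1$ to get $\ded_F(L/L^H)\ge(n+1)-1=n$, and then feed this through Proposition \ref{prop:generic-PV} as before. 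With that repair the case list closes up: $\GL_n$, $\mathbb G_m^n$ (diagonal torus), and $(\mathbb Z/k\mathbb Z)^n$ all visibly contain the diagonal $(\mathbb Z/k\mathbb Z)^n$, and $\OO_n$ for the form $\sum x_i^2$ contains the diagonal matrices with entries $\pm1$, i.e.\ the diagonal $(\mathbb Z/2\mathbb Z)^n$, so part \ref{emb:1} gives the bound $n$; $\SL_n$ and $\mathbb U_n$ contain the distinguished $\mathbb G_a^{n-1}$, so part \ref{emb:2} gives $n-1$.

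Your closing worry about the orthogonal groups is well placed, and for $\SO_n$ it is not merely bookkeeping: a direct computation shows that the subgroup $\left\{\begin{pmatrix}1&\pmb a\\0&I_{n-1}\end{pmatrix}\right\}$ (or any $\GL_n$-conjugate of it, once $n\ge 3$) preserves no nondegenerate quadratic form, so $\SO_n$ does not contain the distinguished $\mathbb G_a^{n-1}$ in the sense required by Proposition \ref{prop:ded-bounds}\ref{emb:2} as literally stated. Some additional input is needed there --- e.g.\ a version of Proposition \ref{prop:ded-bounds} for other unipotent abelian subgroups, or a separate reduction --- and the paper does not spell this out either. So: same approach as the paper, one genuinely broken step ($\mathbb G_a^n$ via torsion elements) that has an easy fix, and one case ($\SO_n$) where the difficulty you flagged is real and a literal application of the quoted propositions does not suffice.
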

Here $\mathbb U_{n}$ is the group of upper triangular matrices with ones on the diagonal.

The differential essential dimension for some other groups like $\PGL_n$ are bounded in a forthcoming paper.

\bibliographystyle{amsplain}

\begin{thebibliography}{10}

\bibitem{bachmayr2021large}
A. Bachmayr, D. Harbater, J. Hartmann, and F. Pop, \emph{{Large fields in differential Galois theory}}, Journal Of The Institute Of Mathematics Of Jussieu. \textbf{20} (2021), 1931--1946.

\bibitem{buhler1997essential}
J. Buhler and Z. Reichstein, \emph{{On the essential dimension of a finite group}}, Compositio Mathematica \textbf{106} (1997), 159--179.

\bibitem{dummit2004abstract}
D. Dummit and R. Foote, \emph{{Abstract algebra}}, 3rd ed.,  John Wiley, Hoboken, 2004.

\bibitem{goldman1957specialization}
L. Goldman, \emph{{Specialization and Picard-Vessiot theory}},
  Transactions of the American Mathematical Society \textbf{85} (1957),
  327--356.

\bibitem{jensen2002generic}
C. Jensen, A. Ledet, N. Yui, \emph{{Generic
  polynomials: constructive aspects of the inverse Galois problem}},
  Cambridge University Press, Cambridge, 2002.

\bibitem{juan2004pure}
L. Juan, \emph{{Pure Picard-Vessiot extensions with generic properties}},
  Proceedings of the American Mathematical Society \textbf{132} (2004),
  2549--2556.

\bibitem{juan2007picard}
L. Juan and A. Ledet, \emph{{On Picard--Vessiot extensions with group
  $\PGL_3$}}, Journal of Algebra \textbf{318} (2007), 786--793.

\bibitem{juan2016generic}
\bysame, \emph{{Generic Picard--Vessiot extensions for nonconnected groups}},
  Communications in Algebra \textbf{44} (2016), 2989--3004.

\bibitem{juan2007generic}
L. Juan and A. Magid, \emph{{Generic rings for Picard--Vessiot
  extensions and generic differential equations}}, Journal of Pure and Applied
  Algebra \textbf{209} (2007), 793--800.

\bibitem{kolchin1968algebraic}
E. Kolchin, \emph{{Algebraic groups and algebraic dependence}}, American
  Journal of Mathematics \textbf{90} (1968), 1151--1164.

\bibitem{kolchin1973differential}
\bysame, \emph{{Differential algebra \& algebraic groups}}, Academic Press, New York, 1973.

\bibitem{merkurjev2017essential}
A. Merkurjev, \emph{Essential dimension}, Bulletin of the American
  Mathematical Society \textbf{54} (2017), 635--661.

\bibitem{reichstein2010essential}
Z. Reichstein, \emph{Essential dimension}, Proceedings of the International
  Congress of Mathematicians, Vol. II, Hindustan Book Agency, New Delhi,
  2010, pp.~162--188.

\bibitem{reichstein2012whatis}
\bysame, \emph{{What is essential dimension?}}, Notices of the AMS \textbf{59}
  (2012), 1432--1434.

\bibitem{roquette1964isomorphisms}
P. Roquette, \emph{Isomorphisms of generic splitting fields of simple
  algebras}, J. reine angew. Math. \textbf{214/215} (1964), 207--226.

\bibitem{seiss2020general}
M. Seiss, \emph{{On general extension fields for the classical groups in
  differential Galois theory}}, \url{https://arxiv.org/abs/2008.12081} [math.AC], 2020.

\bibitem{springer1994linear}
T. Springer, \emph{Linear Algebraic Groups}, 2nd ed., Birkh\"{a}user, Boston, 1998.

\bibitem{van2012galois}
M. van~der Put and M. Singer, \emph{{Galois theory of linear differential equations}}, Springer-Verlag, Berlin, 2003.

\end{thebibliography}

\end{document}